\date{}
\newlength{\defbaselineskip}
\newcommand{\setlinespacing}[1]%
           {\setlength{\baselineskip}{#1 \defbaselineskip}}
\newcommand{\actaqed}{\hfill $\actabox$}
{\medskip\noindent \textit{Proof of #1. }}%
{\actaqed \medskip}
\def\cA{{\mathcal A}}
\def\cC{{\mathcal C}}
\def\cG{{\mathcal G}}
\def\cK{{\mathcal K}}
\def\cM{{\mathcal M}}
\def\cT{{\mathcal T}}
\def\cV{{\mathcal V}}
\def\bbC{{\mathbb C}}
\def\bbN{{\mathbb N}}
\def\bbR{{\mathbb R}}
\def\bbT{{\mathbb T}}
\def\bbZ{{\mathbb Z}}
\def\bA{{\mathbf A}}
\def\bF{{\mathbf F}}
\def\bH{{\mathbf H}}
\def\bN{{\mathbf N}}
\def\bW{{\mathbf W}}
\def\bj{\mathbf j}
\def\bk{\mathbf k}
\def\bs{\mathbf s}
\def\bt{\mathbf t}
\def\btt{\mathbf t}
\def\bu{\mathbf u}
\def\bv{\mathbf v}
\def\bw{\mathbf w}
\def\bx{\mathbf x}
\def\by{\mathbf y}
 \def \<{\langle}
\def\>{\rangle}
\def \Og{\Omega}
\def \ff{\varphi}
\def\bt{\beta}
\def \sign{\operatorname{sign}}
\def\bt{\beta}
\newtheorem{Theorem}{Theorem}[section]
\newtheorem{Lemma}{Lemma}[section]
\newtheorem{Definition}{Definition}[section]
\newtheorem{Proposition}{Proposition}[section]
\newtheorem{Remark}{Remark}[section]
\newtheorem{Corollary}{Corollary}[section]
\numberwithin{equation}{section}
\newcommand{\be}{\begin{equation}}
\newcommand{\ee}{\end{equation}}
\begin{document}

\title{Some lower bounds for optimal sampling recovery of functions with mixed smoothness}

\author{A. Gasnikov and     V. Temlyakov }

\newcommand{\Addresses}{{% additional braces for segregating \footnotesize
  \bigskip
  \footnotesize

\medskip
A.V. Gasnikov, \\ \textsc{Ivannikov institute for System Programming of Russian Academy of Sciences, Moscow, Russia;\\ Steklov Mathematical Institute of Russian Academy of Sciences, Moscow, Russia;  \\ Innopolis University, Tatarstan, Russia.
\\ E-mail:} \texttt{gasnikov@yandex.ru}

 \medskip
  V.N. Temlyakov, \textsc{University of South Carolina, USA,\\ Steklov Mathematical Institute of Russian Academy of Sciences, Russia;\\ Lomonosov Moscow State University, Russia; \\ Moscow Center of Fundamental and Applied Mathematics, Russia.\\  
E-mail:} \texttt{temlyakovv@gmail.com}

}}

\maketitle

\begin{abstract}{Recently, there was a substantial progress in the problem of sampling recovery on function classes 
with mixed smoothness. Mostly, it has been done by proving new and sometimes optimal upper bounds for both linear sampling recovery and for nonlinear sampling recovery. In this paper we address the problem of lower 
bounds for the optimal rates of nonlinear sampling recovery. 
  In the case of linear recovery one can use the well developed theory of estimating the Kolmogorov and linear widths for establishing some lower bounds for the optimal rates.  In the case of nonlinear recovery we cannot use the above approach. It seems like the only technique, which is available now, is based on some simple observations. We demonstrate how these observations can be used. }
\end{abstract}

\section{Introduction}
\label{In}

Recently, there was a substantial progress in the problem of sampling recovery on function classes 
with mixed smoothness. This paper is a followup of the recent papers 
\cite{KoTe}, \cite{VT202}, \cite{VT203}, and \cite{VT205}. In this paper we address the problem of lower 
bounds for the optimal rates of sampling recovery. The problem of sampling recovery on function classes 
with mixed smoothness has a long history with first results going back to the 1963 (see \cite{Smol}). In many cases this 
problem is still open. We refer the reader to the books \cite{DTU} and \cite{VTbookMA} 
for the corresponding historical discussion.

In this section we describe the problem setting and present some known upper bounds. In Sections \ref{qp} -- \ref{S} 
we obtain some new results. Section \ref{D} contains a discussion. In this paper we admit the following convenient and standard notation agreement. 
We use $C$, $C'$ and $c$, $c'$ to denote various positive constants. Their arguments indicate the parameters, which they may depend on. Normally, these constants do not depend on a function $f$ and running parameters $m$, $v$, $u$. We use the following symbols for brevity. For two nonnegative sequences $a=\{a_n\}_{n=1}^\infty$ and $b=\{b_n\}_{n=1}^\infty$ the relation $a_n\ll b_n$ means that there is  a number $C(a,b)$ such that for all $n$ we have $a_n\le C(a,b)b_n$. Relation $a_n\gg b_n$ means that 
 $b_n\ll a_n$ and $a_n\asymp b_n$ means that $a_n\ll b_n$ and $a_n \gg b_n$. 
 For a real number $x$ denote $[x]$ the integer part of $x$, $\lceil x\rceil$ -- the smallest integer, which is 
 greater than or equal to $x$. 
 
 We study the multivariate periodic functions defined on $\bbT^d := [0,2\pi]^d$. Denote for $1\le p<\infty$
 $$
 \|f\|_p := \left((2\pi)^{-d}\int_{\bbT^d}  |f(\bx)|^pd\bx\right)^{1/p},\qquad \bx= (x_1,\dots,x_d)
 $$
 and for $p=\infty$
 $$
 \|f\|_\infty := \sup_{\bx\in\bbT^d} |f(\bx)|.
 $$

We begin with the definition of classes $\bW^r_q$.
\begin{Definition}\label{InD1}
In the univariate case, for $r>0$, let
\be\label{sr7}
F_r(x):= 1+2\sum_{k=1}^\infty k^{-r}\cos (kx-r\pi/2)
\ee
and in the multivariate case, for $\bx=(x_1,\dots,x_d)$, let
$$
F_r(\bx) := \prod_{j=1}^d F_r(x_j).
$$
Denote
$$
\bW^r_q := \{f:f=\varphi\ast F_r,\quad \|\varphi\|_q \le 1\},
$$
where
$$
(\varphi \ast F_r)(\bx):= (2\pi)^{-d}\int_{\bbT^d} \ff(\by)F_r(\bx-\by)d\by.
$$
\end{Definition}
The classes $\bW^r_q$ are classical classes of functions with {\it dominated mixed derivative}
(Sobolev-type classes of functions with mixed smoothness).
The reader can find results on approximation properties of these classes in the books \cite{VTbookMA} and \cite{DTU}.

 We now proceed to the definition of the classes $\bH^r_p$, which is based on the mixed differences. In this paper we obtain new results for these classes.
 
\begin{Definition}\label{InD2}
Let  $\btt =(t_1,\dots,t_d )$ and $\Delta_{\btt}^l f(\bx)$
be the mixed $l$-th difference with
step $t_j$ in the variable $x_j$, that is
$$
\Delta_{\btt}^l f(\bx) :=\Delta_{t_d,d}^l\dots\Delta_{t_1,1}^l
f(x_1,\dots ,x_d ) .
$$
Let $e$ be a subset of natural numbers in $[1,d ]$. We denote
$$
\Delta_{\btt}^l (e) :=\prod_{j\in e}\Delta_{t_j,j}^l,\qquad
\Delta_{\btt}^l (\varnothing) := Id \,-\, \text{identity operator}.
$$
We define the class $\bH_{p,l}^r B$, $l > r$, as the set of
$f\in L_p$ such that for any $e$
\be\label{B7a}
\bigl\|\Delta_{\btt}^l(e)f(\bx)\bigr\|_p\le B
\prod_{j\in e} |t_j |^r .
\ee
In the case $B=1$ we omit it. It is known (see, for instance, \cite{VTbookMA}, p.137) that the classes $\bH^r_{p,l}$ with different $l>r$ are equivalent. So, for convenience we fix one $l= [r]+1$ and omit $l$ from the notation. 
\end{Definition}

It is well known that in the univariate case ($d=1$) the approximation properties of the above $\bW$ and $\bH$ classes 
are similar. It is also well known that in the multivariate case ($d\ge 2$) asymptotic characteristics (for instance, Kolmogorov widths, entropy numbers, best hyperbolic cross trigonometric approximations and others) have different rate of decay in 
the majority of cases. Recently, a new scale of classes has been introduced and studied. It turns out that this scale is 
convenient for simultaneous analysis of optimal sampling recovery of both the $\bW$ and the $\bH$ classes. We give a corresponding definitions now. Let $\mathbf s=(s_1,\dots,s_d )$ be a  vector  whose  coordinates  are
nonnegative integers
$$
\rho(\mathbf s) := \bigl\{ \mathbf k\in\mathbb Z^d:[ 2^{s_j-1}] \le
|k_j| < 2^{s_j},\qquad j=1,\dots,d \bigr\},
$$
where $[a]$ means the integer part of a real number $a$.  
For $f\in L_1 (\bbT^d)$
$$
\delta_{\mathbf s} (f,\mathbf x) :=\sum_{\mathbf k\in\rho(\mathbf s)}
\hat f(\mathbf k)e^{i(\mathbf k,\mathbf x)},\quad \hat f(\mathbf k) := (2\pi)^{-d}\int_{\bbT^d} f(\bx)e^{-i(\mathbf k,\mathbf x)}d\bx.
$$
\begin{Definition}\label{InD3}
Consider functions with absolutely convergent Fourier series. For such functions define the Wiener norm (the $A$-norm or the $\ell_1$-norm)
$$
\|f\|_A := \sum_{\mathbf k}|\hat f({\mathbf k})|.
$$
The following classes, which are convenient in studying sparse approximation with respect to the trigonometric system,
were introduced and studied in \cite{VT150}. Define for $f\in L_1(\bbT^d)$
$$
f_j:=\sum_{\|\bs\|_1=j}\delta_\bs(f), \quad j\in \bbN_0,\quad \bbN_0:=\bbN\cup \{0\}.
$$
For parameters $ a\in \bbR_+$, $ b\in \bbR$ define the class
$$
\bW^{a,b}_A:=\{f: \|f_j\|_A \le 2^{-aj}(\bar j)^{(d-1)b},\quad \bar j:=\max(j,1), \quad j\in \bbN\}.
$$
\end{Definition}

The following embedding result follows easily from known results. We give a detailed proof of it in Section \ref{P}.

\begin{Proposition}\label{InP1} We have for $r>1/q$ 
\be\label{In1}
\bW^r_q \hookrightarrow \bW^{a,b}_A \quad \text{with} \quad a=r-1/q,\, b=1-1/q,\quad 1<q\le 2;
\ee
\be\label{In2}
\bH^r_q \hookrightarrow \bW^{a,b}_A \quad \text{with} \quad a=r-1/q,\, b=1,\quad 1\le q\le 2.
\ee
\end{Proposition}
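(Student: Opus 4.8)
The plan is to bound $\|f_j\|_A$ separately for each $j$, since membership in $\bW^{a,b}_A$ is just a family of level-by-level estimates. Because the blocks $\rho(\bs)$ are pairwise disjoint in frequency,
\[
\|f_j\|_A=\sum_{\|\bs\|_1=j}\|\delta_\bs f\|_A .
\]
From \eqref{sr7} the Fourier coefficients of the kernel satisfy $|\hat F_r(\bk)|\asymp\prod_{l=1}^d|k_l|^{-r}\asymp 2^{-r\|\bs\|_1}$ for $\bk\in\rho(\bs)$, uniformly in $\bs$. Two ingredients then drive the proof: control of the $L_q$-size of the dyadic pieces of $f$, and a Nikol'skii-type inequality passing from the $L_q$-norm to the $A$-norm on frequency blocks. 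Throughout I write $q'$ for the conjugate exponent, $1/q+1/q'=1$.

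For \eqref{In2} I would use the standard equivalent description of $\bH^r_q$ by dyadic blocks, $\|\delta_\bs f\|_q\ll 2^{-r\|\bs\|_1}$ for every $\bs$, which follows from \eqref{B7a} (see \cite{VTbookMA}). For a trigonometric polynomial $t$ with spectrum in a single block $\rho(\bs)$, Hölder's inequality followed by Hausdorff--Young gives the per-block bound
\[
\|t\|_A=\sum_{\bk\in\rho(\bs)}|\hat t(\bk)|
\le|\rho(\bs)|^{1/q}\Big(\sum_{\bk\in\rho(\bs)}|\hat t(\bk)|^{q'}\Big)^{1/q'}
\ll 2^{\|\bs\|_1/q}\,\|t\|_q,\qquad 1\le q\le2 .
\]
Taking $t=\delta_\bs f$ gives $\|\delta_\bs f\|_A\ll 2^{(1/q-r)\|\bs\|_1}$, and since there are $\asymp j^{d-1}$ blocks with $\|\bs\|_1=j$, summation yields $\|f_j\|_A\ll 2^{-(r-1/q)j}j^{d-1}$, which is \eqref{In2}.

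For \eqref{In1} the target power $j^{(d-1)(1-1/q)}$ is strictly smaller than the $j^{d-1}$ above, so the block-by-block summation is too lossy and one must use cancellation across the whole layer $S_j:=\bigcup_{\|\bs\|_1=j}\rho(\bs)$. First, the Littlewood--Paley theorem ($1<q<\infty$) gives $\|f_j\|_q\ll 2^{-rj}$: writing $f=\varphi\ast F_r$, the square function of the level-$j$ part of $\varphi$ is dominated by that of $\varphi$, so the projection onto $S_j$ is $L_q$-bounded uniformly in $j$, and multiplying by the symbol $\hat F_r$ of size $2^{-rj}$ costs exactly the factor $2^{-rj}$. Second, I would invoke the sharp Nikol'skii-type inequality for a hyperbolic layer, namely $\|t\|_A\ll 2^{j/q}\,j^{(d-1)/q'}\,\|t\|_q$ for every trigonometric polynomial $t$ with spectrum in $S_j$ ($1<q\le2$), whose gain over the naive $|S_j|^{1/q}\asymp 2^{j/q}j^{(d-1)/q}$ is precisely the replacement of $j^{(d-1)/q}$ by $j^{(d-1)/q'}$. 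Applying it to $t=f_j$ gives $\|f_j\|_A\ll 2^{(1/q-r)j}j^{(d-1)/q'}$, i.e. \eqref{In1}.

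The main obstacle is the layer inequality of the previous paragraph. For $q=2$ it is immediate from Plancherel and Cauchy--Schwarz, $\|t\|_A\le|S_j|^{1/2}\|t\|_2\asymp 2^{j/2}j^{(d-1)/2}\|t\|_2$, which is the asserted bound since $q=q'=2$. For $1<q<2$, however, the sharp power $j^{(d-1)/q'}$ lies strictly below what either obvious estimate produces: Hausdorff--Young applied to the whole layer gives the larger power $j^{(d-1)/q}$, while a block-by-block estimate combined with the cotype-$2$ inequality $\big(\sum_{\|\bs\|_1=j}\|\delta_\bs t\|_q^2\big)^{1/2}\ll\|t\|_q$ gives $j^{(d-1)/2}$; both exceed $j^{(d-1)/q'}$. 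Reaching the correct exponent therefore genuinely uses cancellation between the dyadic blocks, and this is the one nonelementary input, which I would draw from the known theory of these classes (cf. \cite{VTbookMA}). Granting it, both embeddings follow from the level-by-level estimates above, the level $j=0$ being a single bounded block handled by the convention $\bar j=\max(j,1)$ in Definition \ref{InD3}.
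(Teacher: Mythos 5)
Your proof of (\ref{In1}) is essentially the paper's: the bound $\|f_j\|_q\ll 2^{-rj}$ is (\ref{C2}) (cited from \cite{VTmon}, Ch.~2, Theorem~2.1), and your ``sharp Nikol'skii-type inequality for a hyperbolic layer'' $\|t\|_A\ll 2^{j/q}j^{(d-1)(1-1/q)}\|t\|_q$ is exactly Theorem~\ref{A}, which the paper likewise takes as a known input (\cite{VTmon}, Ch.~1, Theorem~2.2). Your analysis of why the two naive estimates miss the exponent is correct, and invoking the known layer inequality rather than reproving it is what the paper does as well.

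For (\ref{In2}) there is a genuine, though localized, gap at the endpoint $q=1$, which the statement includes. You start from ``$\|\delta_\bs f\|_q\ll 2^{-r\|\bs\|_1}$, which follows from (\ref{B7a})''. That is the content of Theorem~\ref{PT1}, and the theorem gives the $\delta_\bs$ version (\ref{s0.11}) only for $1<q<\infty$: at $q=1$ the Fourier projection onto $\rho(\bs)$ is not bounded on $L_1$ (its operator norm grows like $\prod_j \bar s_j$), so by this route one only obtains $\|\delta_\bs f\|_1\ll\bigl(\prod_j \bar s_j\bigr)2^{-r\|\bs\|_1}$, and the extra logarithmic factors would spoil the exponent $b=1$. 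The paper circumvents this by working with the de la Vall\'ee Poussin blocks $A_\bs(f)$: the bound (\ref{s0.10}) gives $\|A_\bs(f)\|_q\ll 2^{-r\|\bs\|_1}$ for \emph{all} $1\le q\le\infty$, the same per-block Nikol'skii step gives $\|A_\bs(f)\|_A\ll 2^{\|\bs\|_1/q}\|A_\bs(f)\|_q$ as in (\ref{In4}), and since for each $\bk$ only boundedly many $\bu$ (all with $\|\bu\|_1$ within $O(1)$ of the layer index) have $\hat\cA_\bu(\bk)\ne0$ and these coefficients sum to $1$, the layer quantity $\|f_j\|_A$ is still controlled by $\asymp j^{d-1}$ such block bounds, yielding (\ref{In5}). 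For $1<q\le2$ your argument for (\ref{In2}) is correct and coincides with the paper's up to this choice of building blocks; to cover $q=1$ you need to replace $\delta_\bs$ by $A_\bs$ (or otherwise justify the $L_1$ block estimate).
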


We give a very brief history of the recent development of the sampling recovery on these 
classes. We refer the reader   to the books \cite{DTU} and \cite{VTbookMA} for the previous results. In this paper we study the following characteristic of the optimal sampling recovery. Let $\Omega$ be a compact subset of $\bbR^d$ with the probability measure $\mu$ on it.
For a function class $W\subset \cC(\Omega)$,  we  define (see \cite{TWW})
$$
\varrho_m^o(W,L_p) := \inf_{\xi } \inf_{\cM} \sup_{f\in W}\|f-\cM(f(\xi^1),\dots,f(\xi^m))\|_p,
$$
where $\cM$ ranges over all  mappings $\cM : \bbC^m \to   L_p(\Omega,\mu)$  and
$\xi$ ranges over all subsets $\{\xi^1,\cdots,\xi^m\}$ of $m$ points in $\Og$.
Here, we use the index {\it o} to mean optimality. Clearly, the above characteristic is a characteristic of nonlinear recovery.
For a discussion of the sampling recovery by linear methods see Section \ref{D}. 

The authors of \cite{JUV} (see Corollary 4.16 in v3) proved the following bound for $1<q<2$, $r>1/q$ and  $m\ge c(r,d,q) v(\log(2v))^3$, $v\in \bbN$,
\begin{equation}\label{H6}
\varrho_{m}^{o}(\bW^{r}_q,L_2(\bbT^d)) \le C(r,d,q)  v^{-r+1/q-1/2} (\log (2v))^{(d-1)(r+1-2/q)+1/2}.     
\end{equation}
The authors of  \cite{DTM2} proved the following bound
\be\label{H7}
\varrho_{m}^{o}(\bW^{r}_q,L_2(\bbT^d)) \le C'(r,d,q)  v^{-r+1/q-1/2} (\log (2v))^{(d-1)(r+1-2/q)}      
\end{equation}
provided that
\be\label{H8}
m\ge c'(r,d,q) v(\log(2v))^3.
\ee

In the above mentioned results the sampling recovery in the $L_2$ norm has been studied. The technique, which was used in the proofs of the bounds (\ref{H6}) and (\ref{H7}) is heavily based on the fact that we approximate in the $L_2$ norm.  The following upper bound was proved in \cite{KoTe}.

 \begin{Theorem}[{\cite{KoTe}}]\label{HT2}  Let $1<q\le 2\le p<\infty$ and $r>1/q$.  There exist two constants $c=c(r,d,p,q)$ and $C=C(r,d,p,q)$ such that     we have the bound   
 \begin{equation}\label{H9}
  \varrho_{m}^{o}(\bW^{r}_q,L_p(\bbT^d)) \le C   v^{-r+1/q-1/p} (\log (2v))^{(d-1)(r+1-2/q)}
\end{equation}
   for any $v\in\bbN$ and any $m$ satisfying
$$
m\ge c  v(\log(2v))^3.
$$
\end{Theorem}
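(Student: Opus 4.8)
The plan is to pass to the larger scale $\bW^{a,b}_A$ and then to recover by a nonlinear sampling procedure that combines an exact hyperbolic--cross recovery with a sparse treatment of the high--frequency tail. First I would apply the embedding (\ref{In1}) of Proposition \ref{InP1}: since $1<q\le2$ and $r>1/q$ we have $\bW^r_q\hookrightarrow\bW^{a,b}_A$ with $a=r-1/q>0$ and $b=1-1/q$, and a direct computation shows that the target exponents convert exactly, $-r+1/q-1/p=-a-1/p$ and $(d-1)(r+1-2/q)=(d-1)(a+b)$. Hence it suffices to prove
$$
\varrho_m^o(\bW^{a,b}_A,L_p)\le C\,v^{-a-1/p}(\log(2v))^{(d-1)(a+b)},\qquad m\ge c\,v(\log(2v))^3 .
$$
For $f\in\bW^{a,b}_A$ I write $f=\sum_{j\ge0}f_j$ with the block control $\|f_j\|_A\le2^{-aj}\bar j^{(d-1)b}$.

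The baseline is a linear hyperbolic--cross recovery. Fix $n$ with $D_n:=\dim\cT(Q_n)\asymp2^n n^{d-1}\asymp v$, where $Q_n:=\bigcup_{\|\bs\|_1\le n}\rho(\bs)$, so $n\asymp\log v$. The discretization/subsampling machinery (random points together with a Weaver/Batson--Spielman--Srivastava subsampling step) provides $\asymp D_n(\log D_n)^3\asymp v(\log(2v))^3$ points and an operator $\Phi_n$ that reproduces $\cT(Q_n)$ exactly and is bounded on $L_p$; this is exactly what the hypothesis $m\ge c\,v(\log(2v))^3$ is tailored to. Taking $\cM(f)=\Phi_n(f)$ and using that $\Phi_n$ fixes $\sum_{j\le n}f_j\in\cT(Q_n)$ gives $\|f-\Phi_nf\|_p=\|(I-\Phi_n)\sum_{j>n}f_j\|_p\le(1+\|\Phi_n\|_{p\to p})\sum_{j>n}\|f_j\|_p$, and since $\|f_j\|_p\le\|f_j\|_\infty\le\|f_j\|_A$ this is $\ll\sum_{j>n}2^{-aj}\bar j^{(d-1)b}\asymp v^{-a}(\log(2v))^{(d-1)(a+b)}$. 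This already proves the bound with the weaker exponent $-a$; the whole difficulty is to gain the extra factor $v^{-1/p}$.

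That gain must come from the tail and it is a genuinely nonlinear effect. The point is that a block with $\|f_j\|_A\le M$ can have only few large Fourier coefficients, so that keeping its $k$ largest ones and estimating the remainder $h$ by $\|h\|_2\le Mk^{-1/2}$, $\|h\|_\infty\le M$, together with the interpolation $\|h\|_p\le\|h\|_2^{2/p}\|h\|_\infty^{1-2/p}$ (valid for $p\ge2$), yields the sparse bound $\sigma_k(f_j)_p\le Mk^{-1/p}$. I would therefore not simply discard the tail but spend part of the sample budget on a thresholding/greedy sampling step that captures the largest Fourier coefficients of $\sum_{j>n}f_j$; for the extremal, spiky tails permitted by the $A$--norm bound this recaptures precisely the missing factor $v^{-1/p}$ in $L_p$, while the summation over the $\asymp\log v$ active scales reproduces the logarithmic power $(d-1)(a+b)$.

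The hard part will be this last step: realizing the sparse capture of the tail by at most $c\,v(\log(2v))^3$ actual point samples and with a reconstruction map that remains stable in $L_p$ for every $p\ge2$, not only for $p=2$. This requires the $L_p$ Marcinkiewicz--type sampling discretization inequalities for trigonometric polynomials on unions of hyperbolic layers, a compressed--sensing/bounded--orthonormal--system recovery guarantee in $L_p$, and a careful coupling so that the single operator simultaneously reproduces the exactly recovered cross $Q_n$ and the adaptively selected tail coefficients while controlling the discarded part. The elementary $\ell_1$--to--$L_p$ estimate above is routine by comparison; assembling these sampling ingredients within the budget $m\asymp v(\log(2v))^3$, and verifying that no extra logarithmic factors appear beyond $(\log(2v))^{(d-1)(a+b)}$, is where the real work lies.
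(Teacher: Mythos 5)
Your reduction is exactly the paper's proof: Theorem \ref{HT2} is obtained by combining the embedding (\ref{In1}) with the bound for the classes $\bW^{a,b}_A$ (Theorem \ref{srT2}), and your exponent arithmetic $-a-1/p=-r+1/q-1/p$ and $(d-1)(a+b)=(d-1)(r+1-2/q)$ is the whole content of that step. The paper takes Theorem \ref{srT2} as a known result from \cite{KoTe} and does not reprove it, so your further (admittedly incomplete) sketch of the sparse, greedy recovery of the high-frequency tail is not required here, though it is in the spirit of how that cited theorem is actually established.
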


Thus, Theorem \ref{HT2} extends the previously known upper bound for $p=2$ to the case $p\in [2,\infty)$. 

In \cite{KoTe} Theorem \ref{HT2} was derived from the embedding (\ref{In1}) and the following result for the $\bW^{a,b}_A$ classes (see \cite{KoTe}, Theorem 5.3, Remark 5.1, and Proposition 5.1). 

\begin{Theorem}[{\cite{KoTe}}]\label{srT2}  Let $p\in [2,\infty)$. There exist two constants $c(a,p,d)$ and $C(a,b,p,d)$
such that we have the bound
\begin{equation}\label{sr3}
 \varrho_{m}^{o}(\bW^{a,b}_A,L_p(\bbT^d)) \le C(a,b,p,d)  v^{-a-1/p} (\log (2v))^{(d-1)(a+b)}
\end{equation}
    for any $v\in\bbN$ and any $m$ satisfying
$$
m\ge c(a,d,p) v(\log(2v))^3.
$$
\end{Theorem}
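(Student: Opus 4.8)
The plan is to show that the right-hand side of (\ref{sr3}) is, up to the stated logarithmic powers, exactly the rate of \emph{best $v$-term trigonometric approximation} of the class $\bW^{a,b}_A$ in $L_p$, and then to realize this nonlinear approximation rate by a sampling algorithm that reads only $m\asymp v(\log(2v))^3$ point values. First I would decompose $f=\sum_{j\ge 0}f_j$ with $f_j=\sum_{\|\bs\|_1=j}\delta_\bs(f)$, so that the defining condition of the class becomes $\|f_j\|_A\le 2^{-aj}(\bar j)^{(d-1)b}=:A_j$, and I would record that $f_j$ is a trigonometric polynomial whose spectrum lies in $\bigcup_{\|\bs\|_1=j}\rho(\bs)$, a set of cardinality $N_j\asymp 2^j j^{d-1}$. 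The step hyperbolic cross $Q_n:=\bigcup_{\|\bs\|_1\le n}\rho(\bs)$ then has $|Q_n|\asymp 2^n n^{d-1}$.

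The approximation-theoretic core is a per-block estimate for best $s$-term approximation in $L_p$ with respect to the trigonometric system, which I denote $\sigma_s(\cdot)_p$. The elementary Hilbert-space greedy bound $\sigma_s(g)_2\le\|g\|_A s^{-1/2}$ (keep the $s$ largest Fourier coefficients; the $(s{+}1)$-st largest is $\le\|g\|_A/s$, and the squared tail is bounded by that coefficient times $\|g\|_A$), combined with the interpolation inequality $\|h\|_p\le\|h\|_2^{2/p}\|h\|_\infty^{1-2/p}$ for $p\ge 2$ and with $\|h\|_\infty\le\|h\|_A$ applied to the greedy remainder, yields
\be
\sigma_s(f_j)_p\ll \|f_j\|_A\,s^{-1/p}\le A_j\,s^{-1/p},\qquad 1\le s\le N_j,
\ee
and $\sigma_s(f_j)_p=0$ once $s\ge N_j$. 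I would then fix the level $n$ by $2^n n^{d-1}\asymp v$, resolve exactly every block with $j\le n$ (total spectral size $\asymp v$), and distribute the remaining budget over the shells $j>n$ with $s_j\asymp A_j^{p/(p+1)}$, a geometrically decaying allocation. A short computation using $\sum_j s_j\asymp v$, the displayed per-block bound, and $n\asymp\log_2 v$ produces a $v$-term trigonometric polynomial $g$ with $\|f-g\|_p\ll v^{-a-1/p}(\log(2v))^{(d-1)(a+b)}$. This is precisely the target rate; note that the extra factor $v^{-1/p}$ beyond the trivial "$v^{-a}$ from exact hyperbolic-cross recovery" is genuinely produced by the sparse (nonlinear) handling of the high shells, and it cannot come from any linear truncation because the Wiener-norm constraint permits an adversary to concentrate a tail block on a single frequency.

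It remains to realize this rate by point sampling, and this is the main obstacle: one must exhibit a single node set $\xi=\{\xi^1,\dots,\xi^m\}$ with $m\asymp v(\log(2v))^3$ and a single reconstruction map $\cM$ that, from $f(\xi^1),\dots,f(\xi^m)$, returns a near-best $v$-term approximant in the $L_p$ metric, \emph{uniformly} over $f\in\bW^{a,b}_A$. I would invoke the discretization-and-recovery machinery underlying the paper: a Marcinkiewicz--Zygmund sampling discretization of the $L_p$ norm on the relevant polynomial subspaces, available for an $N$-dimensional trigonometric space with $\asymp N(\log N)^{\kappa}$ (deterministic or random) nodes, together with a greedy or $\ell_1$-type reconstruction that converts discretized-norm control into a near-best $v$-term guarantee. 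The Wiener-norm control on each $f_j$ is exactly what makes both ingredients function: it bounds the uniform norm of the polynomials involved, which is the quantity governing both the discretization and the stability of the reconstruction. The cubic power $(\log(2v))^3$ in the sample budget is inherited from these discretization and sparse-recovery lemmas, while the error power $(\log(2v))^{(d-1)(a+b)}$ is dictated by the block count $\asymp n^{d-1}$ computed above; reconciling these two logarithmic scales with a uniform, point-value-only operator is the delicate and decisive step of the argument.
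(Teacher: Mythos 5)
First, a point of reference: the paper itself does not prove Theorem \ref{srT2}; it is imported verbatim from \cite{KoTe} (Theorem 5.3, Remark 5.1 and Proposition 5.1 there), so there is no in-paper argument to compare against. Judged on its own terms, your proposal correctly establishes the \emph{approximation-theoretic} half of the statement: the per-block bound $\sigma_s(f_j)_p \ll \|f_j\|_A\, s^{-1/p}$ (greedy selection in $L_2$, then $\|h\|_p \le \|h\|_2^{2/p}\|h\|_\infty^{1-2/p}$ with $\|h\|_\infty \le \|h\|_A$) and the budget allocation $s_j \asymp A_j^{p/(p+1)}$ over the shells $j>n$ with $2^n n^{d-1}\asymp v$ do yield a $v$-term trigonometric polynomial approximating $f$ to within $v^{-a-1/p}(\log (2v))^{(d-1)(a+b)}$. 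This is indeed where the right-hand side of (\ref{sr3}) comes from.

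The gap is that this approximant is built from the Fourier coefficients of $f$ (you choose the largest coefficients in each block), which the recovery operator in the definition of $\varrho_m^o$ does not have access to. Everything that is actually hard in the theorem lies in replacing this by a map depending only on $f(\xi^1),\dots,f(\xi^m)$, and you dispose of that in a single sentence (``invoke the discretization-and-recovery machinery'') while yourself calling it ``the delicate and decisive step.'' Concretely, two ingredients are missing and neither is routine. (i) A Marcinkiewicz--Zygmund inequality for a single $N$-dimensional trigonometric subspace, which is what you cite, does not suffice: the support of the sparse approximant depends on the unknown $f$, so one needs a \emph{universal} discretization valid simultaneously for all $v$-dimensional coordinate subspaces of $\cT(Q_n)$; this is the result of \cite{DTM2} on random points and is precisely the source of the $(\log(2v))^3$ oversampling. (ii) One needs a Lebesgue-type inequality guaranteeing that a greedy or $\ell_1$-type algorithm run against the \emph{discretized} norm returns a near-best $v$-term approximant in the continuous $L_p$ norm, uniformly over the class, and one must also absorb the non-polynomial tail $f-S_{Q_n}(f)$, whose values contaminate the samples; this is where the $A$-norm control of the shells is really used in \cite{KoTe}. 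Without these two steps stated and carried out, what you have proved is an upper bound for the best $v$-term trigonometric approximation of $\bW^{a,b}_A$ in $L_p$, not for $\varrho_m^o(\bW^{a,b}_A,L_p)$.
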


Theorem \ref{srT2} and embedding (\ref{In2}) imply the following analog of the bound (\ref{H9}) for the $\bH$ classes. There exist two constants $c=c(r,d,p,q)$ and $C=C(r,d,p,q)$ such that     we have the bound for $r>1/q$
 \begin{equation}\label{In6}
  \varrho_{m}^{o}(\bH^{r}_q,L_p(\bbT^d)) \le C   v^{-r+1/q-1/p} (\log (2v))^{(d-1)(r+1-1/q)}
\end{equation}
   for any $v\in\bbN$ and any $m$ satisfying
$$
m\ge c  v(\log(2v))^3.
$$
However, we point out that the bound (\ref{In6}) is weaker than the corresponding known bound for the linear recovery 
(see Section \ref{D} for a discussion).

The following lower bound for the $\bH$ classes is the main result of this paper.  

\begin{Theorem}\label{qpT1} For  $1\le q\le p< \infty$, $p>1$, $r>1/q$, we have  
$$
\varrho_m^o(\bH^r_q,L_p) \ge c(d)m^{-r +1/q-1/p}  (\log m)^{(d-1)/p}  .
$$
\end{Theorem}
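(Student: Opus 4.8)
The plan is to exploit the standard ``fooling'' observation behind all known lower bounds for nonlinear sampling recovery. Fix a sampling set $\xi=\{\xi^1,\dots,\xi^m\}$ and an arbitrary reconstruction map $\cM$. If some $f\in\bH^r_q$ vanishes at all nodes, $f(\xi^1)=\dots=f(\xi^m)=0$, then $-f\in\bH^r_q$ as well and produces exactly the same data vector $(0,\dots,0)$, so $\cM$ returns one and the same $g:=\cM(0,\dots,0)$ for both inputs. Hence $\max(\|f-g\|_p,\|-f-g\|_p)\ge\|f\|_p$, and taking the infimum over $\cM$ and over $\xi$ gives
$$
\varrho_m^o(\bH^r_q,L_p)\ \ge\ \inf_{\xi}\ \sup\bigl\{\|f\|_p:\ f\in\bH^r_q,\ f(\xi^j)=0,\ j=1,\dots,m\bigr\}.
$$
Thus everything reduces to building, for an \emph{arbitrary} node set of size $m$, one function in the class that vanishes at all nodes and has $L_p$-norm of the stated order.

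For the construction I would work at the dyadic level $n$ with $2^n\asymp m$, chosen so that $2^n\ge 2m$. For each $\bs$ with $\|\bs\|_1=n$, partition $\bbT^d$ into the $2^n$ congruent boxes of side lengths $\asymp 2\pi 2^{-s_j}$; since at most $m\le 2^n/2$ of them contain a node, at least half are node-free. Fixing once and for all a smooth, compactly supported profile $\psi$ with many vanishing moments, I attach to each block a translated tensor bump $\psi_{\bs}(\bx)=\prod_{j}\psi\bigl(2^{s_j}(x_j-a^{\bs}_j)\bigr)$ sitting inside a node-free box, and I set $f=c\sum_{\|\bs\|_1=n}\psi_{\bs}$ with $c\asymp 2^{-n(r-1/q)}$. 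There are $N\asymp n^{d-1}\asymp(\log m)^{d-1}$ blocks. The node-free placement forces $f(\xi^j)=0$ for every $j$, and a probabilistic (greedy) selection of the boxes can be arranged so that the supports have $O(1)$-bounded overlap; this is feasible because the total volume $N2^{-n}\to0$, even though exact disjointness across blocks of different shape is impossible.

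Two estimates then finish the argument. First, the $L_p$ lower bound: by the bounded-overlap property one restricts to the part of each support covered by a single bump to get $\|f\|_p^p\gg\sum_{\bs}\|c\psi_{\bs}\|_p^p\asymp N c^p 2^{-n}$, which is of order $\bigl((\log m)^{(d-1)/p}m^{-r+1/q-1/p}\bigr)^{p}$; for $p\ge2$ one may instead invoke the Littlewood--Paley square function together with convexity and avoid any overlap control. Second, membership $f\in C\bH^r_q$: here I would pass to the equivalent block description of the class and estimate $\|\delta_{\bs'}(f)\|_q$ for every $\bs'$. Because $\psi$ is smooth with vanishing moments, the single-bump blocks decay in both directions, $\|\delta_{\bs'}(\psi_{\bs})\|_q\ll 2^{-n/q}\prod_j 2^{-K|s'_j-s_j|}$ with $K$ large, so summing over the $N$ blocks at level $n$ yields $\|\delta_{\bs'}(f)\|_q\ll 2^{-r\|\bs'\|_1}$, i.e. $f$ lies in a fixed multiple of $\bH^r_q$.

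The main obstacle is precisely this last step: verifying membership with a constant independent of $m$ (equivalently, of $n$ and $N$). The delicate point is that one must control the whole family of differences $\Delta_{\btt}^l(e)f$ uniformly in the step $\btt$ and in the set $e$; the danger is a spurious extra factor $N^{1/q}=(\log m)^{(d-1)/q}$ arising when all $N$ bumps resonate with the same step. What saves the estimate is that near a critical step $t_j\asymp 2^{-s^*_j}$ the block contributions are governed by the single-bump bound $\prod_{j\in e}\min(1,|t_j|2^{s_j})^{l}$, and summing these over $\|\bs\|_1=n$ produces geometric decay in $\|\bs-\bs^*\|_1$, hence a \emph{bounded} sum rather than a factor $N$. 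Carrying this out cleanly --- either through the block decay above (straightforward for $1<q<\infty$) or directly from the difference definition of the class (which also covers $q=1$, at the cost of tracking the overlaps of the shifted bumps) --- is the technical heart of the proof.
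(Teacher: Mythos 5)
Your reduction to the fooling inequality is exactly the paper's Proposition \ref{DP1}, and your overall strategy --- one fooling piece per dyadic block $\bs$ in the layer $\|\bs\|_1=n$, $2^n\asymp m$, so that the $\asymp n^{d-1}$ pieces produce the factor $(\log m)^{(d-1)/p}$ --- is also the paper's. The realization of the pieces, however, is genuinely different, and that is where your argument has real gaps. The paper (Lemma \ref{qpL2}) takes for each $\bs$ a nonzero polynomial $g_{\xi,\bs}\in\cT(\rho(\bs))$ vanishing at all $m$ nodes (it exists by a dimension count, since $m\le|\rho(\bs)|/2$), normalizes $\|g_{\xi,\bs}\|_\infty=1$ at a point $\bx_\bs^*$, and multiplies by a shifted Fej\'er kernel, $t_\bs=g_{\xi,\bs}\,\cK_{2^{\bs-2}}(\cdot-\bx_\bs^*)$. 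This yields for free the two estimates you have to fight for: $\|t_\bs\|_q\le\|\cK_{2^{\bs-2}}\|_q\le C2^{n(1-1/q)}$, whence membership after normalization via the $\cA_\bs$-characterization of Theorem \ref{PT1}, and $\|t_\bs\|_\infty\ge|t_\bs(\bx_\bs^*)|\ge c2^n$. Crucially, restricting to $\bs$ with coordinates divisible by $3$ separates the frequency supports of the $t_\bs$, so the $L_p$ lower bound comes from the two-sided estimate (\ref{1.2}) of Theorem \ref{T1.1} with $v=\infty$, valid for all $1<p<\infty$, and no spatial disjointness of supports is ever needed.

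Two steps in your construction do not hold up as written. First, the claim that a greedy or probabilistic selection of node-free boxes yields $O(1)$-bounded overlap of the supports is false: already for $\bs=(n,0,\dots,0)$ and $\bs'=(0,n,0,\dots,0)$ \emph{every} box of the $\bs$-partition meets \emph{every} box of the $\bs'$-partition, so no selection avoids intersections, and a greedy disjointification cannot even start. What is true (and sufficient) is that random placement makes the expected total volume of pairwise intersections $O(N^2 2^{-2n})=o(N2^{-n})$, after which one restricts each $\psi_\bs$ to the portion of its box covered by no other box; but then one must also show that the integral of $|\psi_\bs|^p$ over an arbitrary subset of the box of nearly full measure is still $\gg 2^{-n}$ despite the oscillation forced by the vanishing moments. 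None of this is in the proposal, and the square-function alternative you mention covers only $p\ge 2$ while the theorem includes $1<p<2$. Second, the membership estimate, which you rightly call the technical heart, is left unproved: the single-bump decay $\|\delta_{\bs'}(\psi_\bs)\|_q\ll 2^{-n/q}2^{-K\|\bs'-\bs\|_1}$ needs both high smoothness and many vanishing moments of the profile \emph{and} the $L_q$-boundedness of $\delta_{\bs'}$, which fails at $q=1$ (a case the theorem covers); there one must pass to the $\cA_\bs$-operators or to the difference definition. Your observation that summing the geometric decay over the hyperbolic layer $\|\bs\|_1=n$ avoids the spurious factor $(\log m)^{(d-1)/q}$ is correct and is the right idea, but as it stands the proposal identifies the obstacles without overcoming them, whereas the paper's Fej\'er-kernel construction is designed so that these obstacles never arise.
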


Theorem \ref{qpT1} is a direct corollary of Lemma \ref{qpL2}, which is proved in Section \ref{qp}. The reader can find 
a discussion of this result in Sections \ref{qp} and \ref{D}. Note that a new nontrivial feature of Theorem \ref{qpT1} is the logarithmic factor $(\log m)^{(d-1)/p}$, which shows that some logarithmic in $m$ factor is needed. 

In Section \ref{q1} we derive the following lower bound from the known results developed in numerical integration. 

\begin{Proposition}\label{q1P2} We have for $r>0$
$$
\varrho_m^o(\bH^r_\infty,L_1) \gg m^{-r}(\log m)^{d-1}.
$$
\end{Proposition}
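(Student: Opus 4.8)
The plan is to derive the bound from the classical lower bound for optimal numerical integration on mixed-smoothness classes. The bridge to numerical integration is the elementary remark that, since $\mu$ is the probability measure $(2\pi)^{-d}d\bx$, we have $\int_{\bbT^d}f\,d\mu=\hat f(\mathbf 0)$ and hence
\[
\Bigl|\int_{\bbT^d}f\,d\mu\Bigr|=|\hat f(\mathbf 0)|\le \|f\|_1\qquad\text{for all }f\in L_1 .
\]
Thus control of the $L_1$ recovery error automatically controls the error of integrating $f$, so a lower bound for integration will force a lower bound for $L_1$ recovery.

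I would invoke the following known fact, which is the standard way the numerical-integration lower bound on $\bH^r_\infty$ is proved (see \cite{VTbookMA}, \cite{DTU}): for every collection of nodes $\xi=\{\xi^1,\dots,\xi^m\}\subset\bbT^d$ there is a \emph{fooling function} $f=f_\xi\in\bH^r_\infty$ with
\[
f(\xi^j)=0,\quad j=1,\dots,m,\qquad \hat f(\mathbf 0)=\int_{\bbT^d}f\,d\mu\gg m^{-r}(\log m)^{d-1}.
\]
Here the logarithmic factor $(\log m)^{d-1}$ is exactly the hyperbolic-cross/dyadic-block effect; this is the only substantial ingredient, and I would quote it rather than reprove it. (Strictly, the construction produces $c\,f_\xi\in\bH^r_\infty$ with a constant $c=c(r,d)$, which is absorbed into the $\gg$ and into the final $c(d)$.)

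Given such an $f_\xi$, the passage to nonlinear recovery is immediate from the symmetry of the class. Fix any recovery operator $\cM$ and nodes $\xi$. Because $\bH^r_\infty$ is symmetric, both $f$ and $-f$ lie in the class, and they produce identical (all-zero) samples, so $\cM$ returns one and the same function $g:=\cM(0,\dots,0)$. The triangle inequality gives
\[
\|f-g\|_1+\|{-f}-g\|_1 \ge \|2f\|_1=2\|f\|_1 \ge 2|\hat f(\mathbf 0)|,
\]
whence $\max\bigl(\|f-g\|_1,\|{-f}-g\|_1\bigr)\ge \hat f(\mathbf 0)\gg m^{-r}(\log m)^{d-1}$. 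Since $f,-f\in\bH^r_\infty$, this quantity is at most $\sup_{h\in\bH^r_\infty}\|h-\cM(h(\xi^1),\dots,h(\xi^m))\|_1$; taking the infimum over $\cM$ and over $\xi$ yields the claimed bound. Equivalently, this argument shows that $\varrho_m^o(\bH^r_\infty,L_1)$ dominates the optimal error of numerical integration of $\bH^r_\infty$ by arbitrary $m$-point (possibly nonlinear) methods.

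The main obstacle is locating and citing the correct form of the numerical-integration lower bound: one must ensure that the cited result either is stated for arbitrary methods or, as above, is proved through a node-vanishing fooling function whose integral is $\gg m^{-r}(\log m)^{d-1}$, so that the $\pm f$ device applies to nonlinear recovery. The remaining ingredients — the normalization $|\hat f(\mathbf 0)|\le\|f\|_1$ and the symmetry of $\bH^r_\infty$ under $f\mapsto -f$ — are routine.
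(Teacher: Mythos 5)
Your proposal is correct and follows essentially the same route as the paper: a node-vanishing fooling function with mean $\gg m^{-r}(\log m)^{d-1}$, the observation $\|f\|_1\ge|\hat f(\mathbf 0)|$, and the $\pm f$ symmetry argument against an arbitrary nonlinear map $\cM$. The one step you defer to the literature is exactly the step the paper writes out: \cite{VT1990} supplies $t=\sum_{\|\bs\|_1=n}t_\bs$ vanishing at the nodes with $\int t\gg n^{d-1}$ but only establishes $c\,t\,2^{-rn}n^{-(d-1)/2}\in\bW^r_q$, so the paper separately verifies (via $\|A_\bu(t)\|_\infty\ll(n-\|\bu\|_1+d)^{d-1}$ and Theorem \ref{PT1}) that $c\,t\,2^{-rn}\in\bH^r_\infty$ --- this is precisely where the full factor $(\log m)^{d-1}$, rather than the $(\log m)^{(d-1)/2}$ of Proposition \ref{q1P1}, is earned, so your citation must cover membership in $\bH^r_\infty$ with that stronger normalization.
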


In Section \ref{S} we formulate the setting of the sampling recovery in the general space $L_p(\Omega,\mu)$, $1\le p<\infty$, and instead of the trigonometric system $\cT^d$ we study a general uniformly bounded system $\Psi = \{\psi_{\bk}\}_{\bk\in \bbZ^d}$. We prove there a lower bound for the new classes defined with respect to the trigonometric system $\cT^d$. 

\section{Preliminaries}
\label{P}

 We need some classical trigonometric polynomials. The univariate Fej\'er kernel of order $j - 1$:
$$
\mathcal K_{j} (x) := \sum_{|k|\le j} \bigl(1 - |k|/j\bigr) e^{ikx} 
=\frac{(\sin (jx/2))^2}{j (\sin (x/2))^2}.
$$
The Fej\'er kernel is an even nonnegative trigonometric
polynomial of order $j-1$.  It satisfies the obvious relations
\be\label{FKm}
\| \mathcal K_{j} \|_1 = 1, \qquad \| \mathcal K_{j} \|_{\infty} = j.
\ee
Let $\cK_\bj(\bx):= \prod_{i=1}^d \cK_{j_i}(x_i)$ be the $d$-variate Fej\'er kernels for $\bj = (j_1,\dots,j_d)$ and $\bx=(x_1,\dots,x_d)$.

The univariate de la Vall\'ee Poussin kernels are defined as follows
$$
\cV_m := 2\cK_{2m} - \cK_m.
$$
We also need the following special trigonometric polynomials.
Let $s$ be a nonnegative integer. We define
$$
\mathcal A_0 (x) := 1, \quad \mathcal A_1 (x) := \mathcal V_1 (x) - 1, \quad
\mathcal A_s (x) := \mathcal V_{2^{s-1}} (x) -\mathcal V_{2^{s-2}} (x),
\quad s\ge 2,
$$
where $\mathcal V_m$ are the de la Vall\'ee Poussin kernels defined above.
For $\bs=(s_1,\dots,s_d)\in \bbN^d_0$ define
$$
\cA_\bs(\bx) := \prod_{j=1}^d  \cA_{s_j}(x_j),\qquad \bx=(x_1,\dots,x_d).
$$

We now prove Proposition \ref{InP1}. 

{\bf Proof of Proposition \ref{InP1}.} First, we prove (\ref{In1}). It is well known (see, for instance, \cite{VTmon}, Ch.2, Theorem 2.1) that for $f\in \bW^r_q$ one has for $1<q<\infty$
\be\label{C2}
\|f_j\|_q \le C(d,q,r)2^{-jr},\quad j\in \bbN.
\ee
The known results (see   Theorem \ref{A} below) imply for $1<q\le 2$
\be\label{C3}
\|f_j\|_A \le C(d,q)2^{j/q} j^{(d-1)(1-1/q)} \|f_j\|_q  
\ee
\be\label{C3'}
\le C(d,q,r) 2^{-(r-1/q)j}j^{(d-1)(1-1/q)}    .
\ee
Therefore, class $\bW^r_q$ is embedded into the class $\bW^{a,b}_A$ with $a=r-1/q$ and $b = 1-1/q$. 

Second, we prove (\ref{In2}). Here we need a well known result on the representation of the $\bH$ classes (see, for instance, \cite{VTbookMA}, p.137). 

\begin{Theorem}\label{PT1} Let $f\in \bH_{q,l}^r$.
Then for $\bs\ge\mathbf 0$
\be\label{s0.10}
\bigl\| A_{\bs} (f) \bigr\|_q \le C(r,d,l)2^{-r\|\bs\|_1}, \qquad
1 \le q \le \infty , 
\ee
\be\label{s0.11}
\bigl\|\delta_{\bs} (f) \bigr\|_q \le C(r,d,q,l)2^{-r\|\bs\|_1},
\qquad 1 < q < \infty . 
\ee
Conversely, from {\rm(\ref{s0.10})}  or {\rm(\ref{s0.11})} it follows that there exists a
$B > 0$, which does not depend on $f$, such that $f\in
\bH_{q,l}^r B$.
\end{Theorem}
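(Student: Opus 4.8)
The plan is to establish the equivalence in two directions, in each case exploiting the tensor-product structure of the operators $A_\bs=\prod_{j=1}^d A_{s_j}$ and of the mixed difference $\Delta_\btt^l(e)=\prod_{j\in e}\Delta_{t_j,j}^l$, so that the multivariate estimates reduce to one-dimensional facts applied coordinatewise, with constants uniform in $\bs$ and $\btt$.

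For the direct part, assume $f\in\bH^r_{q,l}$. The key observation is that the univariate block $A_s=\cV_{2^{s-1}}-\cV_{2^{s-2}}$ is a difference of two de la Vall\'ee Poussin operators, each of which reproduces trigonometric polynomials and hence gives a near-best $L_q$-approximation (since $\|\cV_m\|_{q\to q}$ is bounded uniformly in $m$). Thus $\|A_s(f)\|_q$ is controlled by $\|f-\cV_{2^{s-1}}(f)\|_q+\|f-\cV_{2^{s-2}}(f)\|_q$, which by the Jackson inequality is $\ll\omega_l(f,2^{-s})_q$. Iterating over the $d$ variables turns $\|A_\bs(f)\|_q$ into a bound by the \emph{mixed} modulus of smoothness of order $l$ at $(2^{-s_1},\dots,2^{-s_d})$; the defining inequality (\ref{B7a}) with $e=\{1,\dots,d\}$ gives $\ll\prod_j 2^{-rs_j}=2^{-r\|\bs\|_1}$, which is (\ref{s0.10}), valid for all $1\le q\le\infty$. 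To pass to the sharp blocks in (\ref{s0.11}) I would write $\delta_\bs(f)=\delta_\bs\bigl(\sum_{\bs'}A_{\bs'}(f)\bigr)$ and note that only the $O(1)$ neighbours $\bs'$ of $\bs$ contribute, since $\rho(\bs)$ meets the spectra of only boundedly many $\cA_{\bs'}$. The one-dimensional sharp partial-sum operator is bounded on $L_q$ for $1<q<\infty$ by the M.\ Riesz theorem, so $\delta_\bs$ is bounded on $L_q(\bbT^d)$ uniformly in $\bs$, whence $\|\delta_\bs(f)\|_q\ll\sum_{\bs'\approx\bs}\|A_{\bs'}(f)\|_q\ll 2^{-r\|\bs\|_1}$; the restriction $1<q<\infty$ is exactly the range where these Dirichlet-type projections are bounded.

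For the converse, suppose (\ref{s0.10}) holds (the bound (\ref{s0.11}) implies it for $1<q<\infty$ by the same neighbour argument). Using the telescoping identity $\sum_\bs\cA_\bs=\mathrm{Id}$ I reconstruct $f=\sum_\bs A_\bs(f)$ (the series converging in $L_q$, and absolutely under the block bound), and for fixed $e$ and step $\btt$ estimate $\Delta_\btt^l(e)f=\sum_\bs\Delta_\btt^l(e)A_\bs(f)$ term by term. The one-dimensional estimate I need is
\[
\|\Delta_{t_j,j}^l A_{s_j}(g)\|_q\ll\min\bigl\{1,(2^{s_j}|t_j|)^l\bigr\}\,\|A_{s_j}(g)\|_q,
\]
whose first alternative is the trivial bound $\|\Delta_{t_j,j}^l\|_{q\to q}\le 2^l$ and whose second alternative follows by writing the $l$-th difference as the $l$-fold integral of the $l$-th derivative over a cube of side $t_j$ and applying Bernstein's inequality $\|D_j^l A_{s_j}(g)\|_q\ll 2^{ls_j}\|A_{s_j}(g)\|_q$ for the polynomial $A_{s_j}(g)$ of degree $<2^{s_j}$. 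By the product structure this yields $\|\Delta_\btt^l(e)A_\bs(f)\|_q\ll\prod_{j\in e}\min\{1,(2^{s_j}|t_j|)^l\}\,2^{-r\|\bs\|_1}$, and summing over $\bs$ factorizes into a product over $j$: for $j\in e$ one has $\sum_{s_j\ge0}\min\{1,(2^{s_j}|t_j|)^l\}2^{-rs_j}\asymp|t_j|^r$, and for $j\notin e$ the series $\sum_{s_j}2^{-rs_j}$ converges. Hence $\|\Delta_\btt^l(e)f\|_q\ll\prod_{j\in e}|t_j|^r$, i.e.\ $f\in\bH^r_{q,l}B$.

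The main obstacle is this converse summation, and it is precisely here that the hypothesis $l>r$ enters. Splitting the sum for $j\in e$ at $2^{s_j}\asymp|t_j|^{-1}$, the high-frequency part $\sum_{2^{s_j}>|t_j|^{-1}}2^{-rs_j}\asymp|t_j|^r$ is harmless, while the low-frequency part equals $|t_j|^l\sum_{2^{s_j}\le|t_j|^{-1}}2^{(l-r)s_j}$, a geometric sum whose ratio $2^{l-r}$ exceeds $1$ exactly when $l>r$, so it is dominated by its top term and again contributes $\asymp|t_j|^r$; with $l\le r$ this balancing would break down. A secondary technical burden is assembling clean one-dimensional ingredients (Jackson's theorem, the near-best property of $\cV_m$, Bernstein's inequality, and the $L_q$-boundedness of sharp partial sums) and checking that all constants are uniform in $\bs$ and $\btt$, so that the tensorization over the $d$ coordinates is legitimate.
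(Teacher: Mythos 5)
The paper gives no proof of Theorem \ref{PT1}: it is quoted as a known result from \cite{VTbookMA}, p.~137, and your argument is precisely the standard proof of that result --- near-best approximation by de la Vall\'ee Poussin means plus the (tensorized) Jackson inequality for (\ref{s0.10}), uniform $L_q$-boundedness of the sharp blocks $\delta_\bs$ via the Riesz projection for (\ref{s0.11}), and for the converse the term-by-term estimate $\|\Delta^l_{t_j,j}A_{s_j}(g)\|_q\ll\min\{1,(2^{s_j}|t_j|)^l\}\|A_{s_j}(g)\|_q$ with the summation split at $2^{s_j}\asymp|t_j|^{-1}$, where $l>r$ is exactly what is needed. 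The sketch is correct; the only detail worth making explicit is the treatment of coordinates with $s_j=0$, where $A_{s_j}$ is the averaging operator and does not annihilate constants in that variable, so in the direct part one must invoke the defining condition (\ref{B7a}) with $e=\{j:s_j\ge 1\}$ rather than $e=\{1,\dots,d\}$.
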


By Theorem \ref{PT1} we obtain that for $f\in \bH^r_q$ one has for $1\le q\le \infty$
\be\label{In3}
\|A_\bs(f)\|_q \le C(d,r)2^{-r\|\bs\|_1},\quad \bs\in \bbN^d_0.
\ee
It is known and easy to see that for $q\in [1,2]$
\be\label{In4}
\|A_\bs(f)\|_A \le C(d)2^{\|\bs\|_1/q}\|A_\bs(f)\|_q \le C'(d,r)2^{-(r-1/q)\|\bs\|_1} .
\ee
Therefore,
\be\label{In5}
\|f_j\|_A\le C''(d,r) 2^{-(r-1/q)j}j^{(d-1)}    ,
\ee
which completes the proof of (\ref{In2}).

We formulate some known results from harmonic analysis and from the hyperbolic cross approximation theory, which will be used in our analysis.

%We begin with the classical Littlewood-Paley theorem. 

%\begin{Theorem}\label{LP} Let $1<p<\infty$. There exist positive constants $C_1(p,d)$ and $C_2(p,d)$, which may depend on $p$ and $d$, such that for each $f\in L_p$
%$$
%C_1(p,d)\|f\|_p \le \left\|\left(\sum_\bs |\delta_\bs(f,\bx)|^2\right)^{1/2}\right\|_p \le C_2(p,d)\|f\|_p.
%$$
%\end{Theorem}
%\begin{Corollary}\label{LP1} Let $\Lambda$ be a finite set of indices $\bs$, $G:=\cup_{\bs\in\Lambda}\rho(\bs)$, and let the operator $S_G$ map a function $f\in L_p$ to a function
%$$
%S_G(f) := \sum_{\bs\in \Lambda} \delta_\bs(f).
%$$
%Then for $1<p<\infty$
%$$
%\|S_G(f)\|_p \le C(p,d) \|f\|_p.
%$$
%\end{Corollary}

%\begin{Corollary}\label{LP2} Let $1<p<\infty$. Denote $p_*:=\max(p,2)$ and $p^*:=\min(p,2)$.
%Then for $f\in L_p$ we have
%$$
%C_3(p,d)\left(\sum_\bs\|\delta_\bs(f)\|_p^{p_*}\right)^{1/p_*} \le \|f\|_p \le C_4(p,d)\left(\sum_\bs\|\delta_\bs(f)\|_p^{p^*}\right)^{1/p^*}.
%$$
%\end{Corollary}

We begin with the problem of estimating $\|f\|_u$ in terms of
the array $\bigl\{ \|\delta_{\bs} (f)\|_v  \bigr\}$.
Here and below in this section $u$ and $v$  are  scalars  such
that $1\le u,v\le \infty$. Let an array
$\varepsilon = \{\varepsilon_{\bs}\}$ be given, where
$\varepsilon_{\bs}\ge 0$, $\bs = (s_1 ,\dots,s_d)$,
and $s_j$ are nonnegative integers, $j = 1,\dots,d$.
We denote by $G(\varepsilon,v)$ and $F(\varepsilon,v)$
the following sets of functions
$(1\le v\le \infty)$:
$$
G(\varepsilon,v) := \bigl\{ f\in L_v  : \bigl\|\delta_{\bs} (f)
\bigr\|_v
\le\varepsilon_{\bs}\qquad\text{ for all }\bs\bigr\} ,
$$
$$
F(\varepsilon,v) := \bigl\{ f\in L_v  : \bigl\|\delta_{\bs} (f)
\bigr\|_v
\ge\varepsilon_{\bs}\qquad\text{ for all }\bs\bigr\}.
$$

The following theorem is from \cite{VTmon}, p.29 (see also \cite{VTbookMA}, p.94). For the special case $v=2$  see \cite{T29} and \cite{VTmon}, p.86. 

\begin{Theorem}\label{T1.1} The following relations hold:
\begin{equation}\label{1.1}
\sup_{f\in G(\varepsilon,v)}\|f\|_u \asymp\left(\sum_{\bs}
\varepsilon_{\bs}^u2^{\|\bs\|_1(u/v-1)}\right)^{1/u},
\qquad 1\le v < u < \infty ;
\end{equation}
\begin{equation}\label{1.2}
\inf_{f\in F(\varepsilon,v)}\|f\|_u \asymp\left(\sum_{\bs}
\varepsilon_{\bs}^u  2^{\|\bs\|_1(u/v-1)}\right)^{1/u},
\qquad 1< u < v\le\infty ,
\end{equation}
with constants independent of $\varepsilon$.
\end{Theorem}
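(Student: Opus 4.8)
The plan is to deduce everything from two standard facts and then organise the four one-sided estimates into two explicit constructions and two (mutually dual) universal inequalities. The first fact is the Nikol'skii inequality for blocks: for a trigonometric polynomial $t$ with spectrum in $\rho(\bs)$ and $1\le v\le u\le\infty$,
\be
\|t\|_u\ll 2^{\|\bs\|_1(1/v-1/u)}\|t\|_v .
\ee
This is where the kernels of Section \ref{P} enter. Choosing $K_\bs$, a product of de la Vall\'ee Poussin kernels $\cV_m$, whose Fourier transform is $1$ on $\rho(\bs)$ and which is supported in a bounded dilate of it, one has $t=t\ast K_\bs$, so Young's inequality with $1/u+1=1/v+1/w$ gives $\|t\|_u\le\|t\|_v\|K_\bs\|_w$ with $\|K_\bs\|_w\asymp 2^{\|\bs\|_1(1-1/w)}=2^{\|\bs\|_1(1/v-1/u)}$ (the last estimate following from $\|\cK_j\|_1=1$, $\|\cK_j\|_\infty=j$ and interpolation). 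The second fact is the Littlewood--Paley equivalence $\|f\|_p\asymp\bigl\|(\sum_\bs|\delta_\bs(f)|^2)^{1/2}\bigr\|_p$ for $1<p<\infty$. Finally note that the right-hand sides of (\ref{1.1}) and (\ref{1.2}) are the weighted sequence norm $\bigl(\sum_\bs(2^{\|\bs\|_1(1/v-1/u)}\varepsilon_\bs)^u\bigr)^{1/u}$, since $u/v-1=u(1/v-1/u)$.

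The two easy directions are the lower bound in (\ref{1.1}) and the upper bound in (\ref{1.2}); both are realised by an explicit extremal function. For (\ref{1.1}) I would take $f=\sum_\bs t_\bs$, where $t_\bs$ is a translate of a product kernel with spectrum in $\rho(\bs)$ that is extremal for Nikol'skii, i.e. $\|t_\bs\|_v\asymp\varepsilon_\bs$ and $\|t_\bs\|_u\asymp 2^{\|\bs\|_1(1/v-1/u)}\varepsilon_\bs$ (a Dirichlet/Fej\'er-type peak of height $\asymp 2^{\|\bs\|_1/v}\varepsilon_\bs$ concentrated on a box of measure $\asymp 2^{-\|\bs\|_1}$). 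Such an $f$ lies in $G(\varepsilon,v)$ because $\|\delta_\bs(f)\|_v=\|t_\bs\|_v\asymp\varepsilon_\bs$; and choosing the translations so that the peaks sit over disjoint boxes makes the $L_u$ norms add in $\ell_u$, whence $\|f\|_u^u\gg\sum_\bs\|t_\bs\|_u^u\asymp\sum_\bs\varepsilon_\bs^u2^{\|\bs\|_1(u/v-1)}$. The extremal function for the upper bound in (\ref{1.2}) is the dual construction, with the $v=\infty$ endpoint corresponding to $v'=1$.

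The two hard directions are the upper bound in (\ref{1.1}) and the lower bound in (\ref{1.2}); these carry the real content. Each is a universal inequality valid for every $f$ — for the first, $\|f\|_u\ll\bigl(\sum_\bs\|\delta_\bs(f)\|_v^u2^{\|\bs\|_1(u/v-1)}\bigr)^{1/u}$, which one then applies to $f\in G(\varepsilon,v)$ — and the two are dual to one another: expanding $\langle f,g\rangle=\sum_\bs\langle\delta_\bs(f),\delta_\bs(g)\rangle$, bounding each summand by H\"older on the block and the whole sum by H\"older in $\bs$ with exponents $u,u'$, and using $u'/v'-1=-u'(u/v-1)/u$, one sees that the upper bound for $(v,u)$ is equivalent to the lower bound for the conjugate pair $(v',u')$. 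Half of the range is then elementary: if $u\le 2$ the conjugate exponent satisfies $u'\ge 2$, and for $u'\ge 2$ the lower bound in (\ref{1.2}) follows from Littlewood--Paley (which gives $\|g\|_{u'}\asymp\|(\sum_\bs|\delta_\bs(g)|^2)^{1/2}\|_{u'}$), the pointwise bound $(\sum_\bs|\delta_\bs(g)|^{u'})^{1/u'}\le(\sum_\bs|\delta_\bs(g)|^{2})^{1/2}$ valid for $u'\ge2$, and Nikol'skii; by duality this proves the upper bound in (\ref{1.1}) for $v<u\le 2$.

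The main obstacle is the complementary range $u>2$ of the upper bound in (\ref{1.1}) (equivalently $u<2$ of the lower bound in (\ref{1.2})), where the argument above breaks down. Applying the triangle inequality to the square function only produces the larger $\ell_2$-sum $\bigl(\sum_\bs 2^{2\|\bs\|_1(1/v-1/u)}\|\delta_\bs(f)\|_v^2\bigr)^{1/2}$, and a lacunary example shows that the target $\ell_u$-sum genuinely cannot be replaced by this $\ell_2$-sum. What is really required is the limiting Besov--Lebesgue embedding of Jawerth--Franke type, with the same fine index $u$ on both sides. I would establish it by decomposing each block $\delta_\bs(f)$ into atoms supported on the dyadic boxes of sidelengths $2^{-s_j}$ (again via the reproducing kernels of Section \ref{P}) and summing the atomic contributions; the decisive point is that at each fixed $\bs$ these boxes tile $\bbT^d$, so that organising the sum by the sizes of the atomic coefficients yields exactly the $\ell_u$ — rather than $\ell_2$ — structure demanded by (\ref{1.1}). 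This atomic decomposition and its summation are the technical heart of the theorem.
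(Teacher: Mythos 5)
The paper does not prove Theorem~\ref{T1.1}: it is imported verbatim from \cite{VTmon} (p.~29) and \cite{VTbookMA} (p.~94), so your attempt has to be judged against the proof given there. Your overall architecture is the right one and matches the standard treatment in spirit: the block Nikol'skii inequality obtained from the de la Vall\'ee Poussin kernels via Young's inequality, the Littlewood--Paley equivalence, the duality identity $u'/v'-1=-u'(u/v-1)/u$ pairing the upper bound in (\ref{1.1}) with the lower bound in (\ref{1.2}), and the observation that Littlewood--Paley together with the pointwise comparison of $\ell_2$ and $\ell_{u'}$ settles the range $u\le 2$ of the former (equivalently $u'\ge 2$ of the latter) are all correct and correctly assembled.

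The genuine gap is the remaining quarter, the universal inequality $\|f\|_u\ll\bigl(\sum_{\bs}2^{\|\bs\|_1(u/v-1)}\|\delta_{\bs}(f)\|_v^u\bigr)^{1/u}$ for $2<u<\infty$, which you yourself call the technical heart but for which you only name a strategy (``atomic decomposition of Jawerth--Franke type'') without carrying it out. In the hyperbolic-cross setting this is not a routine transfer of the isotropic Franke argument: at a fixed level $\|\bs\|_1=n$ there are $\asymp n^{d-1}$ blocks whose natural boxes have sidelengths $2^{-s_1}\times\cdots\times 2^{-s_d}$ with $\sum_j s_j=n$, i.e.\ the same volume but completely different aspect ratios; the tilings associated with different $\bs$ of the same level are mutually transverse and do not nest across levels, so ``organising the sum by the sizes of the atomic coefficients'' has no canonical meaning and the decisive combinatorial step is exactly what is missing. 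The same geometric obstruction affects your extremal construction for the lower bound in (\ref{1.1}): two concentration boxes such as $2^{-n}\times 1$ and $1\times 2^{-n}$ on $\bbT^2$ cannot be made disjoint by any translation, so ``peaks over disjoint boxes'' must be replaced by a quantitative verification that each $t_{\bs}$ exceeds the sum of all the other $t_{\bs'}$ (there are infinitely many, across all levels) on a fixed proportion of its own box; this is true for suitably separated $\bs$ (compare the restriction to $Y_{n,3}$ in the proof of Lemma~\ref{qpL2}) but is asserted rather than proved. Until the $u>2$ universal inequality and the overlap estimate are actually supplied, the proposal proves only the half of the theorem that follows from Littlewood--Paley and H\"older.
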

We will need a corollary of Theorem \ref{T1.1} (see \cite{VTmon}, Ch.1, Theorem 2.2), which we formulate as a theorem.
Let $Q$ be a finite set of points in $\mathbb Z^d$, we denote
$$
\cT(Q) :=\left\{ t\colon t(\mathbf x) =\sum_{\mathbf k\in Q}a_{\mathbf k}
e^{i(\bk,\bx)}\right\} 
$$
and
$$
Q_n := \bigcup_{\bs:\|\bs\|_1 \le n}\rho(\bs).
$$
\begin{Theorem}\label{A} Let $1<q\le 2$. For any $t\in \cT(Q_n)$ we have
$$
\|t\|_A:=\sum_\bk |\hat t(\bk)| \le C(q,d) 2^{n/q} n^{(d-1)(1-1/q)}\|t\|_q.
$$
\end{Theorem}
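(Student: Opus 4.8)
The plan is to deduce Theorem \ref{A} from Theorem \ref{T1.1} by a duality argument. Fix $1<q\le 2$ and write $q'=q/(q-1)\in[2,\infty)$ for the conjugate exponent. Since the $A$-norm is dual to the $\ell_\infty$-norm of Fourier coefficients, for $t\in\cT(Q_n)$ I would introduce the companion polynomial $g(\bx):=\sum_{\bk\in Q_n}\sigma_\bk e^{i(\bk,\bx)}$ with $\sigma_\bk:=\hat t(\bk)/|\hat t(\bk)|$ (and $\sigma_\bk:=0$ when $\hat t(\bk)=0$), so that $\|\hat g\|_{\ell_\infty}\le 1$ and $g\in\cT(Q_n)$. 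By Parseval's identity for the orthonormal system $\{e^{i(\bk,\bx)}\}$,
$$
(2\pi)^{-d}\int_{\bbT^d} t(\bx)\overline{g(\bx)}\,d\bx=\sum_{\bk}\hat t(\bk)\overline{\hat g(\bk)}=\sum_{\bk}|\hat t(\bk)|=\|t\|_A,
$$
and Hölder's inequality on $\bbT^d$ with the normalized measure gives the clean reduction
$$
\|t\|_A\le\|t\|_q\,\|g\|_{q'}.
$$
Everything thus reduces to the Nikolskii-type bound $\|g\|_{q'}\le C(q,d)\,2^{n/q}n^{(d-1)(1-1/q)}$ for $g\in\cT(Q_n)$ with $\|\hat g\|_{\ell_\infty}\le 1$.

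To estimate $\|g\|_{q'}$ I would apply Theorem \ref{T1.1}, relation (\ref{1.1}), with $v=2$ and $u=q'$ (legitimate for $1<q<2$, since then $2<q'<\infty$). The decisive trick is to measure the dyadic blocks of $g$ in $L_2$, where Parseval makes the estimate trivial: since $|\hat g(\bk)|\le 1$ and $|\rho(\bs)|\le C\,2^{\|\bs\|_1}$,
$$
\|\delta_\bs g\|_2=\Big(\sum_{\bk\in\rho(\bs)}|\hat g(\bk)|^2\Big)^{1/2}\le|\rho(\bs)|^{1/2}\le C\,2^{\|\bs\|_1/2}=:\varepsilon_\bs,
$$
while $\delta_\bs g=0$ once $\|\bs\|_1>n$. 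Feeding $\varepsilon_\bs$ into (\ref{1.1}) gives
$$
\|g\|_{q'}\le C\Big(\sum_{\|\bs\|_1\le n}\varepsilon_\bs^{q'}2^{\|\bs\|_1(q'/2-1)}\Big)^{1/q'}\le C\Big(\sum_{\|\bs\|_1\le n}2^{\|\bs\|_1(q'-1)}\Big)^{1/q'}.
$$
Grouping by $j=\|\bs\|_1$ and using $\#\{\bs\in\bbN_0^d:\|\bs\|_1=j\}\asymp j^{d-1}$, the last sum is dominated by its top term and is $\asymp n^{d-1}2^{n(q'-1)}$; since $(q'-1)/q'=1/q$ and $1/q'=1-1/q$, this is exactly $\|g\|_{q'}\le C\,2^{n/q}n^{(d-1)(1-1/q)}$, which together with the reduction above settles $1<q<2$. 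The endpoint $q=2$ ($q'=2$), not covered by (\ref{1.1}) because it requires $v<u$, I would dispatch separately by Cauchy--Schwarz, $\|t\|_A\le|Q_n|^{1/2}\|t\|_2$ with $|Q_n|\asymp 2^{n}n^{d-1}$.

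The main obstacle is pinning down the logarithmic exponent. The naive routes all overshoot: applying the Hausdorff--Young inequality globally on $Q_n$, or summing the sharp per-block bound $\|\delta_\bs t\|_A\le C\,2^{\|\bs\|_1/q}\|\delta_\bs t\|_q$ by Hölder over $\bs$, both produce $n^{(d-1)/q}$, whereas the Minkowski/Littlewood--Paley route produces $n^{(d-1)/2}$; for $q<2$ each of these is strictly larger than the claimed $n^{(d-1)(1-1/q)}=n^{(d-1)/q'}$. The obstruction is genuine, not merely a matter of sloppy constants: a clean Hölder split that would yield the correct weight $2^{n/q}n^{(d-1)/q'}$ forces the auxiliary inequality $(\sum_\bs\|\delta_\bs t\|_q^q)^{1/q}\le C\|t\|_q$, and this is \emph{false} for $q<2$, since the Littlewood--Paley square function forces the reverse direction. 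The way around it is precisely the dual $L_{q'}$ reformulation combined with measuring blocks in $L_2$ rather than in $L_q$ before invoking the \emph{sharp} summation estimate (\ref{1.1}); that is where the exponent drops from $(d-1)/q$ to $(d-1)/q'$, and it is the only step in which the non-trivial content of Theorem \ref{T1.1} is actually used.
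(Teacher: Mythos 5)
Your proof is correct, and it takes essentially the route the paper intends: the duality reduction $\|t\|_A\le\|t\|_q\|g\|_{q'}$ with a unimodular-coefficient companion polynomial $g$, followed by Theorem \ref{T1.1}, relation (\ref{1.1}), applied with $v=2$, $u=q'$ to the blocks $\|\delta_\bs g\|_2\le C2^{\|\bs\|_1/2}$, is precisely how the statement is obtained as a corollary of Theorem \ref{T1.1} (the argument of \cite{VTmon}, Ch.~1, Theorem~2.2, which the paper cites rather than reproduces). The exponent bookkeeping and the separate Cauchy--Schwarz treatment of the endpoint $q=2$ are also in order.
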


%The following Nikol'skii type inequalities for the hyperbolic cross trigonometric polynomials are from \cite{VTmon}, Chapter 1, Section 2.
%\begin{Theorem}\label{NI} Let $1\le q< p <\infty$. For any $t\in \cT(N)$ we have
%$$
%\|t\|_p \le C(q,p,d)N^\bt \|t\|_q,\quad \bt=1/q-1/p.
%$$
%\end{Theorem}

\section{The case $1\le q\le p\le \infty$}
\label{qp}

Let us discuss lower bounds for the nonlinear characteristic $\varrho_m^o(W,L_p)$.
 Denote for $\bN = (N_1,\dots,N_d)$, $N_j\in \bbN_0$, $j=1,\dots,d$,
 $$
\Pi(\bN,d) := \{\bk \in \bbZ^d\,:\, |k_j|\le N_j, j=1,\dots,d\}
$$
and
$$
\cT(\bN,d) := \left\{f=\sum_{\bk\in \Pi(\bN,d)} c_\bk e^{i(\bk,\bx)}\right\},\quad \vartheta(\bN) :=\prod_{j=1}^d (2N_j+1). 
$$
In this section $\Omega = \bbT^d$ and $\mu$ is the normalized Lebesgue measure on $\bbT^d$. The following Lemma \ref{qpL1} was proved in \cite{VT203}.

\begin{Lemma}[{\cite{VT203}}]\label{qpL1} Let $1\le q\le p\le \infty$ and let  $\cT(\bN,d)_q$ denote the unit $L_q$-ball of the subspace $\cT(\bN,d)$. Then we have for $m\le \vartheta(\bN)/2$ that
$$
\varrho_m^o(\cT(2\bN,d)_q,L_p) \ge c(d)\vartheta(\bN)^{1/q-1/p}  .
$$
\end{Lemma}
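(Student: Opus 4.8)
The plan is to combine the standard ``$\pm g$'' fooling argument with an explicit construction of a concentrated polynomial that vanishes at the sample points. First I would record the reduction. Fix any point set $\xi=\{\xi^1,\dots,\xi^m\}$ and any recovery map $\cM$. If $g\in\cT(2\bN,d)$ satisfies $\|g\|_q\le1$ and $g(\xi^i)=0$ for all $i$, then both $g$ and $-g$ lie in $\cT(2\bN,d)_q$ and produce the same sample vector $\mathbf 0$, hence the same output $h:=\cM(\mathbf 0)$; by the triangle inequality $\max(\|g-h\|_p,\|g+h\|_p)\ge\frac12(\|g-h\|_p+\|g+h\|_p)\ge\|g\|_p$, so $\sup_{f}\|f-\cM(f(\xi^1),\dots,f(\xi^m))\|_p\ge\|g\|_p$. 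Taking the infimum over $\cM$ and over $\xi$, the lemma reduces to the purely constructive statement: for every set of $m\le\vartheta(\bN)/2$ points there is a $g\in\cT(2\bN,d)$ with $g|_\xi=0$ and ratio $\|g\|_p/\|g\|_q\ge c(d)\vartheta(\bN)^{1/q-1/p}$, i.e.\ a polynomial vanishing on $\xi$ whose $L_p/L_q$-ratio is of the largest possible (Nikolskii) order.

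Second, I would build $g$ by localization, using a single kernel as the model for the extremal ratio. The shifted Fej\'er kernel $\cK_{\bN}(\cdot-\bx_0)\in\cT(\bN,d)\subset\cT(2\bN,d)$ satisfies $\|\cK_{\bN}\|_r\asymp\vartheta(\bN)^{1-1/r}$ for all $1\le r\le\infty$ (this is exactly where I use $\|\cK_j\|_1=1$, $\|\cK_j\|_\infty=j$ and tensorization), so by itself it has $L_p/L_q$-ratio $\asymp\vartheta(\bN)^{1/q-1/p}$; I use Fej\'er rather than the Dirichlet kernel $\cD_{\bN}(\bx):=\sum_{\bk\in\Pi(\bN,d)}e^{i(\bk,\bx)}$ precisely to avoid the logarithmic loss in $L_1$ when $q=1$. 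The point-evaluation hypothesis enters through counting: partition $\bbT^d$ into $\asymp\vartheta(\bN)$ congruent boxes of side-lengths $\asymp 1/N_j$; since there are only $m\le\vartheta(\bN)/2$ samples, at least half the boxes are empty, so I may center the kernel at the center $\bx_0$ of an empty box, where it peaks away from all $\xi^i$. To turn approximate vanishing into exact vanishing I would set $g:=P_{\Xi}u$, the $L_2(\bbT^d)$-orthogonal projection of $u:=\cK_{\bN}(\cdot-\bx_0)$ onto $\Xi:=\{t\in\cT(2\bN,d):t|_\xi=0\}$ (codimension $\le m$); then $g|_\xi=0$ automatically and the correction $u-g$ lies in $\Xi^\perp=\operatorname{span}\{\cD_{2\bN}(\cdot-\xi^i)\}$, the span of the reproducing kernels of the sampling functionals.

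The remaining bookkeeping is routine: from $\|g\|_p\ge\|u\|_p-\|u-g\|_p$ and $\|g\|_q\le\|u\|_q+\|u-g\|_q$ together with the kernel asymptotics, the bound $\|g\|_p/\|g\|_q\gtrsim\vartheta(\bN)^{1/q-1/p}$ follows as soon as $\|u-g\|_p\le\frac12\|u\|_p$ and $\|u-g\|_q\lesssim\|u\|_q$. The hard part will be exactly this uniform control of the correction over all configurations $\xi$. Writing $u-g=\sum_i c_i\cD_{2\bN}(\cdot-\xi^i)$ with $c=G^{-1}\beta$, where $G=(\cD_{2\bN}(\xi^i-\xi^{i'}))_{i,i'}$ is the Gram matrix of the representers and $\beta_i=u(\xi^i)=\cK_{\bN}(\xi^i-\bx_0)$, the smallness of $\beta$ should come from the decay of $\cK_{\bN}$ together with the emptiness of the chosen box; the genuine difficulty is that when the $\xi^i$ cluster (possible even for $m$ near $\vartheta(\bN)/2$, where the empty box has no buffer of empty neighbours) the matrix $G$ becomes ill-conditioned and $c$ need not be small. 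I expect the real content of the proof to be taming this case, either by first thinning $\xi$ to a well-separated subset and using that $\cT(\bN,d)$ cannot resolve points closer than $\asymp 1/N_j$, or by exhibiting directly a uniformly bounded multiplier $\Psi\in\cT(\bN,d)$ with $\Psi|_\xi=0$ and $\Psi\approx1$ on the empty box and taking $g=u\,\Psi$; such a $\Psi$ is easy to write down for separated and for tightly clustered configurations but requires a genuinely uniform argument in general. By contrast, the $\pm g$ step and the Fej\'er-norm estimates are entirely standard.
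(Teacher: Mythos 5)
Your reduction via the $\pm g$ fooling argument and your target inequality (find $g\in\cT(2\bN,d)$ vanishing on $\xi$ with $\|g\|_p/\|g\|_q\ge c(d)\vartheta(\bN)^{1/q-1/p}$) are exactly right, and your Fej\'er-kernel norm estimates are correct. But the construction of $g$ is where the proof actually lives, and your route does not close. Projecting $u=\cK_{\bN}(\cdot-\bx_0)$ orthogonally onto $\Xi=\{t\in\cT(2\bN,d):t|_\xi=0\}$ requires bounding the correction $u-g=\sum_i c_i\cD_{2\bN}(\cdot-\xi^i)$ with $c=G^{-1}\beta$, and, as you yourself observe, the Gram matrix $G$ degenerates for clustered configurations; neither of your two proposed repairs (thinning $\xi$, or building a multiplier $\Psi$ with $\Psi|_\xi=0$ and $\Psi\approx1$ on the empty box) is carried out, and the second is essentially the whole difficulty restated. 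So the proposal has a genuine gap at its central step.

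The trick that avoids all of this is visible in the paper's proof of the analogous Lemma \ref{qpL2} (Section \ref{qp}), and it inverts your construction. Since $m\le\vartheta(\bN)/2<\dim\cT(\bN,d)$, the subspace of $\cT(\bN,d)$ vanishing on $\xi$ is nontrivial by pure dimension counting --- no geometric information about $\xi$ is needed. Take \emph{any} nonzero $g_\xi$ in it, normalize $\|g_\xi\|_\infty=1$ with the maximum attained at $\bx^*$, and set $t:=g_\xi\cdot\cK_{\bN}(\cdot-\bx^*)$. Then $t\in\cT(2\bN,d)$ (degrees add --- this is why the statement concerns $\cT(2\bN,d)$ while the count uses $\vartheta(\bN)$), $t$ vanishes on $\xi$ because $g_\xi$ does, $|t(\bx^*)|=\cK_{\bN}(\mathbf 0)\asymp\vartheta(\bN)$, and $\|t\|_q\le\|g_\xi\|_\infty\|\cK_{\bN}\|_q\ll\vartheta(\bN)^{1-1/q}$. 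The Nikolskii inequality $\|t\|_\infty\ll\vartheta(\bN)^{1/p}\|t\|_p$ for $t\in\cT(2\bN,d)$ then gives $\|t\|_p\gg\vartheta(\bN)^{1-1/p}$, hence the ratio $\vartheta(\bN)^{1/q-1/p}$. In other words: rather than localizing first and then forcing the zeros (which costs you a correction term you cannot control), force the zeros first by soft dimension counting and then localize by multiplying by the kernel centered at the peak --- the vanishing is inherited for free and no Gram matrix ever appears.
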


Let $n$ be a natural number. Denote 
$$
\bH(Q_n)_q := \left\{f\,:\, f \in \cT(Q_n), \quad   \|A_\bs(f)\|_q\le 1\right\}.
$$
Theorem \ref{PT1} implies that $\bH(Q_n)_q$ is embedded in $\bH^r_q C2^{rn}$ with some constant $C$ independent of $n$. Moreover, $\bH(Q_{n+b})_q$ is embedded in $\bH^r_q C(b)2^{rn}$ with some constant $C(b)$ independent of $n$.
We now prove the following analog of Lemma \ref{qpL1}. 

\begin{Lemma}\label{qpL2} Let $1\le q\le p< \infty$, $p>1$ and $n$ be a natural number divisible by $3$.  Denote $S_n:= \min_{\|\bs\|_1=n} |\rho(\bs)|$. Clearly, $S_n\asymp 2^n$. Then there exists a constant $b$ independent of $n$ such that we have for $m\le S_n/2$ 
$$
\varrho_m^o(\bH(Q_{n+b})_q,L_p) \ge c(d)2^{n(1/q-1/p)}  n^{(d-1)/p}  .
$$
\end{Lemma}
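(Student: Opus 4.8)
The plan is to prove the lower bound by the standard ``indistinguishable pair'' mechanism: since $\bH(Q_{n+b})_q$ is symmetric, it suffices to produce a single function $h \in \bH(Q_{n+b})_q$ that vanishes at every sampling node $\xi^1,\dots,\xi^m$ and satisfies $\|h\|_p \gg 2^{n(1/q-1/p)} n^{(d-1)/p}$. Then $h$ and $-h$ both lie in the class and have identical (zero) samples, so for any node set $\xi$ and any recovery map $\cM$ one has $\sup_{f}\|f-\cM(f(\xi^1),\dots,f(\xi^m))\|_p \ge \|h\|_p$. Thus the whole problem reduces to the construction of such an $h$.

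The function $h$ will be a sum of $M$ localized blocks, one for each admissible frequency block at level $n$. To keep the de la Vall\'ee Poussin projections clean I would use only blocks $\bs$ with $\|\bs\|_1=n$ and all coordinates $s_j$ divisible by $3$ (this is where divisibility of $n$ by $3$ is used): there are still $M \asymp n^{d-1}$ such blocks, and spacing the scales by a factor $\ge 2^3$ guarantees that the frequency supports of the operators $A_\bs$ from Theorem \ref{PT1} are pairwise disjoint, so that $A_\bs h$ reproduces exactly the $\bs$-th summand while $A_{\bs'} h = 0$ for every other admissible $\bs'$. Membership $h \in \bH(Q_{n+b})_q$ then reduces to the single-block bounds $\|h_\bs\|_q \le 1$, the constant $b$ (together with the doubling needed below) being chosen so that all the enlarged blocks sit inside $Q_{n+b}$.

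For each such $\bs$ the set $\rho(\bs)$ is, up to an integer translation (i.e. a modulation, which preserves all $L_u$-norms), a solid box $\Pi(\bN_\bs)$ with $\vartheta(\bN_\bs) \asymp S_n \asymp 2^n \ge 2m$. I would therefore invoke the single-box mechanism behind Lemma \ref{qpL1}: inside the doubled modulated box $\cT(2\bN_\bs)$, whose dimension exceeds $2m$ while only $m$ linear sampling constraints are imposed, there is a function $h_\bs$ with $\|h_\bs\|_q \le 1$, vanishing at all nodes, with $\|h_\bs\|_p \gg \vartheta(\bN_\bs)^{1/q-1/p} \asymp 2^{n(1/q-1/p)}$. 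Moreover this extremal function can be taken to be a modulated Fej\'er/de la Vall\'ee Poussin bump concentrated near a chosen center $\by_\bs$, corrected by a small term that restores exact vanishing. Partitioning $\bbT^d$ into $M$ congruent cells $U_\bs$ and placing $\by_\bs$ well inside $U_\bs$ and away from the nodes, the bumps concentrate on disjoint cells, since the bump scale $2^{-n}$ is far finer than $|U_\bs| \asymp n^{-(d-1)}$.

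Summing, $h := \sum_\bs h_\bs$ vanishes at all nodes and lies in $\bH(Q_{n+b})_q$; cell-by-cell disjointness then gives the $\ell_p$-type lower bound $\|h\|_p^p \ge \sum_\bs \int_{U_\bs}|h|^p \gg M\, 2^{np(1/q-1/p)}$, i.e. $\|h\|_p \gg n^{(d-1)/p} 2^{n(1/q-1/p)}$, which is the asserted rate. Note that this disjoint-support route produces the full power $M^{1/p}=n^{(d-1)/p}$ uniformly in $1<p<\infty$, whereas mere orthogonality of the blocks would only yield the weaker $\ell_2$-exponent. \textbf{The main obstacle} is precisely the interaction in this last step: one must show that the extremal vanishing functions supplied by the mechanism of Lemma \ref{qpL1} can be simultaneously (a) localized, so their polynomial tails are negligible on the other cells $U_{\bs'}$, and (b) kept near-Nikolskii-extremal after the correction enforcing exact vanishing at the shared $m$ nodes. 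Controlling these tails, so that on each cell one genuinely has $|h| \gtrsim |h_\bs|$, while respecting the global sampling constraints is the delicate part; the budget $m \le S_n/2$ is exactly what leaves enough dimension in every block to absorb the $m$ constraints.
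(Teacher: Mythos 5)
Your skeleton is the paper's: the symmetric pair $\pm h$ with zero samples, the blocks $\bs$ with $\|\bs\|_1=n$ and coordinates divisible by $3$ (so the $\asymp n^{d-1}$ frequency blocks are disjoint), and a per-block Fej\'er-kernel construction with $\|t_\bs\|_q\ll 2^{n(1-1/q)}$ and $\|t_\bs\|_\infty\gg 2^n$. The gap is exactly at the step you flag as ``the main obstacle'': you do not resolve it, and the mechanism you propose for it (spatial disjointness on congruent cells) fails for two concrete reasons. First, you cannot prescribe the bump centers $\by_\bs$. The only construction that gives exact vanishing at the $m$ nodes ``for free'' is the paper's: take $g_{\xi,\bs}\in\cT(\rho(\bs))$ vanishing at the nodes (it exists because $\dim\cT(\rho(\bs))\ge 2m$) with $\|g_{\xi,\bs}\|_\infty=1$ attained at some point $\bx^*_\bs$, and set $t_\bs=g_{\xi,\bs}\cdot\cK_{2^{\bs-2}}(\cdot-\bx^*_\bs)$; here the center is dictated by where $g_{\xi,\bs}$ peaks, which you do not control, and distinct blocks may peak at essentially the same place. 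Your alternative --- a Fej\'er bump at a chosen $\by_\bs$ ``corrected by a small term that restores exact vanishing'' --- imposes $m\asymp 2^n$ linear constraints, and nothing guarantees the correction is small; this is precisely what the multiplication trick avoids. Second, even with prescribed centers the bumps are anisotropic: $t_\bs$ has extent $\asymp 2^{-s_j}$ in coordinate $j$, and for $\bs\in Y_{n,3}$ some $s_j$ equal $3$, so some bumps have diameter $O(1)$ in certain directions. Two ``crossing'' thin boxes (e.g.\ $\bs=(3,n-3)$ and $\bs'=(n-3,3)$ for $d=2$) always intersect, so they cannot be confined to disjoint congruent cells of volume $n^{-(d-1)}$, and the claim $\|h\|_p^p\ge\sum_\bs\int_{U_\bs}|h|^p$ with $|h|\gtrsim|h_\bs|$ on $U_\bs$ has no basis as stated.

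The paper's resolution avoids spatial localization entirely: since the blocks $\delta_{\bu(\bs)}(f)=\delta_{\bu(\bs)}(t_\bs)$ are disjoint in frequency and each satisfies $\|\delta_{\bu(\bs)}(t_\bs)\|_\infty\ge c(d)2^n$ with $\|\bu(\bs)\|_1\asymp n$, the inequality (\ref{1.2}) of Theorem \ref{T1.1} with $u=p$, $v=\infty$ gives directly
\begin{equation*}
\|f\|_p \ge c\Bigl(\sum_{\bs\in Y_{n,3}} (2^n)^p\, 2^{-\|\bu(\bs)\|_1}\Bigr)^{1/p} \gg 2^{n(1-1/p)}n^{(d-1)/p},
\end{equation*}
which after dividing by $\max_\bs\|A_\bs(f)\|_q\ll 2^{n(1-1/q)}$ yields the stated bound. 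That frequency-side inequality is the one genuinely nontrivial ingredient; it delivers the full $n^{(d-1)/p}$ factor without any control on where the mass of the individual blocks sits, and it is what you would need to import (or reprove by a careful interference estimate) to close your argument.
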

\begin{proof} Let a set $\xi\subset \bbT^d := [0,2\pi]^d$ of points $\xi^1,\dots,\xi^m$ be given. Let $n$ be a natural number divisible by $3$ and let $Y_{n,3}$ denote the set of all  $\bs \in \bbN^d$ such that all the coordinates of $\bs$ are natural numbers divisible by $3$ and $\|\bs\|_1=n$. Clearly, $|Y_{n,3}| \asymp n^{d-1}$. Consider the subspaces
$$
T(\xi,\bs) := \{f\in \cT(\rho(\bs)):\, f(\xi^\nu) =0,\quad \nu=1,\dots,m\},\quad \bs \in Y_{n,3}.
$$
Let  $g_{\xi,\bs}\in T(\xi,\bs)$ and a point $\bx_\bs^*$ be such that $|g_{\xi,\bs}(\bx^*)|=\|g_{\xi,\bs}\|_\infty=1$.  
We set $2^{\bs-2} := (2^{s_1-2},\dots,2^{s_d-2})$ and
\be\label{Rf}
 t_\bs(\bx) := g_{\xi,\bs}(\bx)\cK_{2^{\bs-2}}(\bx-\bx_\bs^*),\quad f:= \sum_{\bs\in Y_{n,3}} t_\bs  .
\ee
 Then $f\in \cT(Q_{n+d})$, $f(\xi^\nu)=0$, $\nu=1,\dots,m$, and the bound 
\be\label{R19}
 \|g_{\xi,\bs}\cK_{2^{\bs-2}}\|_q \le \|g_{\xi,\bs}\|_\infty \|\cK_{2^{\bs-2}}\|_q \le C_1(d)2^{n(1-1/q)},\quad \bs \in Y_{n,3}
\ee
implies that for all $\bs$
\be\label{qp1}
\|A_\bs(f)\|_q \le C_1(q,d)2^{n(1-1/q)} .
\ee
In (\ref{R19}) we used the known bound for the $L_q$ norm of the Fej\'er kernel (see \cite{VTbookMA}, p.83, (3.2.7)). 
Moreover, our assumption that all the coordinates of $\bs$ are natural numbers divisible by $3$ implies that 
$$
\delta_\bu(f) = \delta_\bu(t_\bs)
$$
with only one appropriate $\bs$. It is easy to derive from here that for each $\bs$ such that $\|\bs\|_1=n$ there exists 
$\bu(\bs)$ such that $n-2d \le \|\bu(\bs)\|_1 \le n+d$ with the property 
\be\label{qp2}
\|\delta_{\bu(\bs)}(t_\bs)\|_\infty \ge c(d) \|t_\bs\|_\infty, \qquad c(d) >0. 
\ee
 By (\ref{FKm}) we get
\be\label{R20}
|t_\bs(\bx^*)| \ge C_2(d)2^n.
\ee
We now apply the inequality, which directly follows from (\ref{1.2}) of Theorem \ref{T1.1} with $u=p$ and $v=\infty$, and obtain
\be\label{R21}
\|f\|_p \ge C_3(d,p)  2^{n(1-1/p)}n^{(d-1)/p} .
\ee
 
Let $\cM$ be a mapping from $\bbC^m$ to $L_p$. Denote $g_0 := \cM(\mathbf 0)$. Then for $h := f (\max_\bs \|A_
\bs(f)\|_q)^{-1}$ we have 
\be\label{qp3}
\| h-g_0\|_p +\|-h - g_0\|_p \ge 2\|h\|_p.
\ee
Inequality (\ref{qp1}) and the fact that $f\in \cT(Q_{n+d})$ imply  
\be\label{qp4}
\max_\bs \|A_\bs(f)\|_q  \le C_1'(d)2^{n(1-1/q)}\quad \text{and}\quad A_\bs(f) =0, \, \|s\|_1 > n+3d.
\ee
 Relations (\ref{qp3}), (\ref{qp4}), (\ref{R21}), and the fact that both $h$ and $-h$ belong to $\bH(Q_{n+d})_q$ complete the proof of Lemma \ref{qpL2}.

\end{proof}

As a direct corollary of Lemma \ref{qpL2} we obtain Theorem \ref{qpT1} from the Introduction.   
 
\begin{Remark}\label{qpR1} By the Bernstein inequalities (see, for instance, \cite{VTbookMA}, p.89) one finds out that there exists a constant $C(r,d)>0$ such that 
$$
C(r,d) \vartheta(\bN)^{-r} \cT(2\bN,d)_q \subset \bW^r_q.
$$
Then by Lemma \ref{qpL1} we obtain
\be\label{qp5}
\varrho_m^o(\bW^r_q,L_p) \ge c(d)m^{-r +1/q-1/p}.
\ee
It is well known (see, for instance, \cite{DTU}, p.42) that classes $\bW^r_q$ are embedded in the classes $\bH^r_q$.
Therefore, (\ref{qp5}) implies the same lower bound for the classes $\bH^r_q$. However, it is weaker than the bound in 
Theorem \ref{qpT1}. 
\end{Remark}

\section{The case $1\le p\le q\le \infty$}
\label{q1}

We now proceed to the case $1\le p\le q\le \infty$ and concentrate on the special case, when $p=1$ and $q=\infty$. 
It is clear that we have the following inequalities for all $1\le p\le q\le \infty$ and for all classes $\bF^r_q$ ($\bF$ stands for both $\bW$ and $\bH$)
$$
\varrho_m^o(\bF^r_\infty,L_1) \le \varrho_m^o(\bF^r_q,L_p).
$$

The following functions were built in \cite{VT1990} (see also \cite{VTbookMA}, pp. 264--266): For any number $n\in \bbN$ and any set of points $\{\xi^1,\dots,\xi^N\}$, $N\le 2^{n-1}$, there are functions $t_\bs \in \cT(2^{\bs -1},d)$ such that 
$$
t_\bs(\xi^j) =0, \quad j=1,\dots, N, \qquad \|t_\bs\|_\infty \le 1
$$ 
and
\be\label{q11}
\int_{\bbT^d} t(\bx) d\bx \ge c(d)n^{d-1},\qquad t(\bx) := \sum_{\|\bs\|_1=n} t_\bs(\bx).
\ee
Moreover, it was proved there that for $q<\infty$ one has: There exists a constant $c=c(r,q,d)>0$ such that 
$$
 c t 2^{-rn}n^{-(d-1)/2} \in \bW^r_q.
$$
This example implies the following Proposition \ref{q1P1}.

\begin{Proposition}\label{q1P1} For any $q<\infty$ we have for $r>1/q$
$$
\varrho_m^o(\bW^r_q,L_1) \gg m^{-r}(\log m)^{(d-1)/2}.
$$
\end{Proposition}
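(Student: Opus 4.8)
The plan is to combine the explicit ``fooling function'' from \cite{VT1990} quoted just above with the elementary symmetrization observation that underlies all the lower bounds in this paper. Fix an arbitrary set $\xi = \{\xi^1,\dots,\xi^m\}$ of $m$ points and an arbitrary recovery mapping $\cM\colon \bbC^m\to L_1(\bbT^d)$, and set $g_0 := \cM(\mathbf 0)$. I would choose $n$ to be the smallest natural number with $2^{n-1}\ge m$; then $2^n\asymp m$ and $n\asymp \log m$, and the quoted construction applies with $N=m$. It produces polynomials $t_\bs\in \cT(2^{\bs-1},d)$ with $t_\bs(\xi^j)=0$ for all $j$, with $\|t_\bs\|_\infty\le 1$, and with $\int_{\bbT^d} t\,d\bx\ge c(d)n^{d-1}$ for $t=\sum_{\|\bs\|_1=n}t_\bs$, together with the membership $f := c\,t\,2^{-rn}n^{-(d-1)/2}\in \bW^r_q$ for a suitable $c=c(r,q,d)>0$.

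Since $t$ vanishes at every $\xi^j$, so does $f$, whence $\cM(f(\xi^1),\dots,f(\xi^m)) = \cM(\mathbf 0) = g_0$; the same holds for $-f$, which also lies in $\bW^r_q$ by symmetry of the class. The triangle inequality
\be
\|f-g_0\|_1 + \|-f-g_0\|_1 \ge \|2f\|_1 = 2\|f\|_1
\ee
then shows that at least one of $f,-f$ is recovered with $L_1$-error at least $\|f\|_1$, so $\sup_{g\in\bW^r_q}\|g-\cM(g(\xi^1),\dots,g(\xi^m))\|_1\ge\|f\|_1$. Because the lower bound for $\|f\|_1$ obtained below depends only on $m$ and not on the particular $\xi$ or $\cM$, taking the infima over $\cM$ and over $\xi$ preserves it, giving the desired bound for $\varrho_m^o(\bW^r_q,L_1)$.

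It remains to bound $\|f\|_1$ from below, and here I would simply use that the $L_1$ norm dominates the absolute value of the mean, $\|f\|_1 \ge (2\pi)^{-d}\bigl|\int_{\bbT^d} f\,d\bx\bigr|$. By the integral estimate for $t$ this yields $\|f\|_1 \ge c'(r,q,d)\,2^{-rn}n^{(d-1)/2}$, and substituting $2^n\asymp m$, $n\asymp\log m$ from the choice of $n$ converts this into $\|f\|_1\gg m^{-r}(\log m)^{(d-1)/2}$, exactly the asserted rate.

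The argument is short precisely because the delicate part---constructing trigonometric polynomials that vanish at $m$ prescribed points while keeping a large integral and controlled mixed smoothness---is already packaged in \cite{VT1990}. The only genuine point to get right is the lower bound for $\|f\|_1$: here $p=1$ lies outside the range $1<u$ of \eqref{1.2}, so one cannot invoke Theorem \ref{T1.1} as in Lemma \ref{qpL2} (where it was used with $u=p>1$, $v=\infty$), and one must instead extract the norm from the mean value of $f$, which is the $L_1$-specific substitute. The symmetrization via $g_0=\cM(\mathbf 0)$ is what forces the stated error on \emph{every} mapping $\cM$, linear or nonlinear alike.
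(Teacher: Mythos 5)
Your proof is correct and follows exactly the route the paper intends: the paper presents the fooling functions from \cite{VT1990} and simply states that ``this example implies'' Proposition \ref{q1P1}, leaving implicit precisely the steps you supply (choosing $n$ with $2^{n-1}\ge m$, the symmetrization via $g_0=\cM(\mathbf 0)$ as in Lemma \ref{qpL2}, and bounding $\|f\|_1$ from below by the mean since Theorem \ref{T1.1} is unavailable at $u=1$). Nothing is missing and nothing differs in substance from the paper's argument.
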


We now show how the above example implies the lower bound for the $\bH$ classes -- Proposition \ref{q1P2} from Introduction. 
 
{\bf Proof of Proposition \ref{q1P2}.} We prove that there is a positive constant $c(d,q,r)$ such that 
$$
c(d,q,r)t2^{-rn} \in \bH^r_\infty.
$$
 For that we estimate $\|A_\bu(t)\|_\infty$ for all $\bu$ and use Theorem \ref{PT1}. Obviously, $A_\bu(t)=0$ if for some $j$ we have $2^{u_j-2} > 2^{s_j-1}$. Therefore, 
it is sufficient to analyze $\bu$ such that $\|\bu\|_1 \le n+d$. In the same way we see that 
$$
A_\bu(t) = \sum_{\bs\in Y_n(\bu)} A_\bu(t_\bs),\quad Y_n(\bu):=\{\bs\,:\,  s_j\ge u_j-1,\, j=1,\dots, d, \, \|\bs\|_1=n\}.
$$
Denote $w_j:= s_j-u_j+1$, $\bw:=(w_1,\dots,w_d)$. Then for $\bs\in Y_n(\bu)$ we have $w_j\ge 0$, $j=1,\dots,d$, and 
$$
\|\bw\|_1 = \|\bs\|_1-\|\bu\|_1 +d =n-\|\bu\|_1 +d.
$$
The total number of such $\bw$ is $\ll (n-\|\bu\|_1 +d)^{d-1}$. Therefore,
$$
 \|A_\bu(t)\|_\infty \ll (n-\|\bu\|_1 +d)^{d-1} \ll 2^{r(n-\|\bu\|_1)}.
 $$
By Theorem \ref{PT1} this implies that there exists a positive constant $c(d,q,r)$ such that  $c(d,q,r)t2^{-rn} \in \bH^r_\infty$.
We now use (\ref{q11}) and complete the proof in the same way as it has been done in the proof of Lemma \ref{qpL2} above. 
 
We now make a comment in the style of Remark \ref{qpR1}, which points out that somewhat weaker than Propositions \ref{q1P1} and \ref{q1P2}  results can be derived from the known results. The following Lemma \ref{q1L1} was proved in \cite{VT202}.

\begin{Lemma}[{\cite{VT202}}]\label{q1L1} Let $\cT(\bN,d)_\infty$ denote the unit $L_\infty$-ball of the subspace $\cT(\bN,d)$. Then we have for $m\le \vartheta(\bN)/2$ that
$$
\varrho_m^o(\cT(\bN,d)_\infty,L_1) \ge c(d) >0  .
$$
\end{Lemma}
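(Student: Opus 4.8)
The plan is to reduce the statement to the construction of a single ``fooling function'' and then to build that function from well-localized kernels, in the spirit of the example behind (\ref{q11}) and of the proof of Lemma \ref{qpL2}.

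First I would record the standard symmetrization step. Fix a point set $\xi=\{\xi^1,\dots,\xi^m\}$ with $m\le \vartheta(\bN)/2$ and an arbitrary mapping $\cM\colon\bbC^m\to L_1$. Since the class $\cT(\bN,d)_\infty$ is symmetric, it suffices to produce one function $f\in\cT(\bN,d)$ with $\|f\|_\infty\le 1$, with $f(\xi^\nu)=0$ for $\nu=1,\dots,m$, and with $\|f\|_1\ge c(d)$. Indeed, both $f$ and $-f$ lie in $\cT(\bN,d)_\infty$ and feed $\cM$ the same data $(0,\dots,0)$, so, writing $g_0:=\cM(\mathbf 0)$, the triangle inequality $\|f-g_0\|_1+\|{-f}-g_0\|_1\ge\|2f\|_1=2\|f\|_1$ yields $\sup_{h}\|h-\cM(h(\xi^1),\dots,h(\xi^m))\|_1\ge \|f\|_1\ge c(d)$, which is exactly the claimed bound.

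Second, I would construct the fooling function. The subspace $V:=\{f\in\cT(\bN,d):f(\xi^\nu)=0,\ \nu=1,\dots,m\}$ has dimension at least $\vartheta(\bN)-m\ge\vartheta(\bN)/2$. My plan is to build $f$ as a sign-adjusted superposition of Fej\'er-type kernels, each concentrated at scale $\asymp(N_1^{-1},\dots,N_d^{-1})$ about a node of a grid of $\asymp\vartheta(\bN)$ cells in $\bbT^d$. Because $m\le\vartheta(\bN)/2$, at least half of the cells carry no sampling point, leaving $\gtrsim\vartheta(\bN)$ free amplitudes; imposing the $m$ linear interpolation conditions $f(\xi^\nu)=0$ still leaves a subspace of positive dimension of admissible superpositions. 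On this subspace I would tune the amplitudes so that the dominant bumps essentially do not overlap, which keeps $\|f\|_\infty\le 1$, while their common sign forces the mean $(2\pi)^{-d}\int_{\bbT^d}f\ge c(d)$ and hence $\|f\|_1\ge (2\pi)^{-d}|\int_{\bbT^d}f|\ge c(d)$. The norm bookkeeping here is of the same type used in (\ref{R19})--(\ref{R21}): the relations (\ref{FKm}) for the $L_1$ and $L_\infty$ sizes of the Fej\'er kernel control the overlap and the surviving mass, and, if a cleaner lower bound is wanted, (\ref{1.2}) of Theorem \ref{T1.1} can be invoked.

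The hard part will be reconciling the three requirements simultaneously against an adversarial $\xi$: keeping $\|f\|_\infty\le 1$ (bounded overlap of the bumps), keeping $\|f\|_1\ge c(d)$ (enough aligned mass), and enforcing vanishing at all $m$ points, all with $m$ as large as $\vartheta(\bN)/2$. A naive single-bump version — multiply one vanishing polynomial by a concentrated Fej\'er kernel as in Lemma \ref{qpL2} — concentrates at scale $\asymp N_j^{-1}$ only if the kernel eats a constant fraction of the degree budget in each coordinate, so the vanishing polynomial must live in a box of side $\asymp N_j/2$; this loses a factor $2^{-d}$ in the available dimension and therefore fails to reach $m=\vartheta(\bN)/2$ once $d\ge 2$. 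It is precisely the multi-bump (multi-scale) bookkeeping of \cite{VT202} and \cite{VT1990} that makes the constant $c(d)$ independent of $\bN$, and getting that bookkeeping to close is where the work lies.
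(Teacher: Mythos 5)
First, a point of reference: the paper does not prove Lemma \ref{q1L1}; it is imported from \cite{VT202}, so the comparison below is with the known route to this result (the construction of \cite{VT1990} that the paper itself quotes in Section \ref{q1} as the source of (\ref{q11})). Your first step --- the symmetrization reducing the lemma to producing one $f\in\cT(\bN,d)$ with $f(\xi^\nu)=0$, $\|f\|_\infty\le 1$ and $\|f\|_1\ge c(d)$ --- is correct and is exactly Proposition \ref{DP1}. The problem is that this reduction is the trivial part: the entire content of the lemma is the existence of such an $f$ against an \emph{arbitrary} point set of cardinality up to $\vartheta(\bN)/2$, and your second step describes a plan for constructing it without executing it. The dimension count $\dim V\ge\vartheta(\bN)/2$ only guarantees a nonzero element of the nullspace; it gives no control on the ratio $\|f\|_1/\|f\|_\infty$ for any element of $V$, and a priori a half-dimensional subspace could consist entirely of functions concentrated near single points, for which $\|f\|_1\asymp\vartheta(\bN)^{-1}\|f\|_\infty$ (this is precisely what the single-bump argument of Lemma \ref{qpL1} yields for $q=\infty$, $p=1$: a lower bound $c(d)\vartheta(\bN)^{-1}$, weaker than the claimed constant by a full factor of $\vartheta(\bN)$, not merely by the $2^{-d}$ dimension loss you point to).

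The specific step that would fail as written is the sentence ``I would tune the amplitudes so that the dominant bumps essentially do not overlap \dots while their common sign forces the mean $\ge c(d)$.'' Imposing the $m$ homogeneous interpolation conditions on a superposition of $\asymp\vartheta(\bN)$ localized bumps generically forces the surviving coefficients to change sign, so there is no reason the means of the bumps add up rather than cancel; reconciling ``all amplitudes of one sign and of comparable size on a constant fraction of the cells'' with ``vanishing at $m\asymp\vartheta(\bN)$ adversarially placed points'' is exactly the nontrivial combinatorial/analytic content of \cite{VT1990}, and you acknowledge rather than close this gap. To make the argument complete you should either carry out that construction or simply invoke it: the statement quoted in Section \ref{q1} before (\ref{q11}) provides, for $N\le 2^{n-1}$ points and each $\bs$ with $\|\bs\|_1=n$, a polynomial $t_\bs\in\cT(2^{\bs-1},d)$ vanishing at the points with $\|t_\bs\|_\infty\le1$ and mean bounded below by a constant; a single such $t_\bs$ with $2^{\bs-1}\le\bN$ already yields the lemma (up to the value of the absolute constant in the hypothesis $m\le\vartheta(\bN)/2$), and combined with your first step this closes the proof.
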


\begin{Remark}\label{q1R1} In the same way as above in Remark \ref{qpR1} by using Lemma \ref{q1L1} instead of Lemma \ref{qpL1} we obtain
$$
\varrho_m^o(\bH^r_\infty,L_1) \gg \varrho_m^o(\bW^r_\infty,L_1) \gg m^{-r}  .
$$

\end{Remark}

{\bf Comment on the Gelfand width. \ref{q1}.1.}  It is easy to see from the construction of functions $t$ in \cite{VT1990} (see also \cite{VTbookMA}, pp. 264--266), mentioned above, that the sampling linear functionals can be replaced by any linear functionals. This means that Propositions \ref{q1P1} and \ref{q1P2} hold for
the following asymptotic characteristics as well. The Gelfand width is defined as follows
$$
c_m(\bF,X) := \inf_{\ff_1,\dots,\ff_m}\sup_{f\in \bF:\, \ff_j(f)=0,\,j=1,\dots,m} \|f\|_X
$$
where $\ff_1,\dots,\ff_m$ are linear functionals. 
Thus,   we have
\be\label{Ge0}
c_m(\bW^r_\infty,L_1) \gg   m^{-r}(\log m)^{(d-1)/2}
\ee
and
\be\label{Ge1}
c_m(\bH^r_\infty,L_1) \gg   m^{-r}(\log m)^{d-1}.
\ee

\section{Sampling recovery on classes with structural condition}
\label{S}
 
 We formulate the setting of the sampling recovery in the general space $L_p(\Omega,\mu)$, $1\le p<\infty$, and instead of the trigonometric system $\cT^d$ we study a general uniformly bounded system $\Psi = \{\psi_{\bk}\}_{\bk\in \bbZ^d}$. More precisely, let $\Omega$ be a compact subset of $\bbR^d$ with the probability measure $\mu$ on it. By the $L_p$ norm, $1\le p< \infty$, of the complex valued function defined on $\Omega$,  we understand
$$
\|f\|_p:=\|f\|_{L_p(\Omega,\mu)} := \left(\int_\Omega |f|^pd\mu\right)^{1/p}\quad \text{and}\quad \|f\|_\infty := \sup_{\bx\in\Omega}|f(\bx)|.
$$
Let a uniformly bounded system $\Psi:=\{\psi_\bk\}_{\bk\in \bbZ^d}$ be indexed by $\bk\in\bbZ^d$. Consider a sequence of subsets $\cG:=\{G_j\}_{j=1}^\infty$, $G_j \subset \bbZ^d$, $j=1,2,\dots$, such that
 \be\label{Di4}
 G_1\subset G_2\subset \cdots \subset G_j\subset G_{j+1} \subset \cdots,\qquad \bigcup_{j=1}^\infty G_j =\bbZ^d.
 \ee
Consider functions representable in the form of absolutely convergent series 
\be\label{Direpr}
f = \sum_{\bk\in\bbZ^d} a_\bk(f)\psi_\bk,\qquad \sum_{\bk\in\bbZ^d} |a_\bk(f)|<\infty.
\ee
For $\bt \in (0,1]$ and $r>0$ consider the following class $\bA^r_\bt(\Psi,\cG)$ of functions $f$, which have representations (\ref{Direpr}) satisfying   conditions
\be\label{DiAr}
  \left(\sum_{  \bk\in G_j\setminus G_{j-1}} |a_\bk(f)|^\bt\right)^{1/\bt} \le 2^{-rj},\quad j=1,2,\dots,\quad G_0 :=\emptyset  .
\ee
Probably, the first realization of the idea of the classes $\bA^r_\bt(\Psi,\cG)$ was realised in \cite{VT150} in the special case, when $\Psi$ is the trigonometric system $\cT^d := \{e^{i(\bk,\bx)}\}_{\bk\in\bbZ^d}$ (see \cite{VT203} for a detailed historical discussion). The classes $\bA^r_\bt(\Psi)$ studied in \cite{VT203} correspond to the case of $\bA^r_\bt(\Psi,\cG)$ with
\be\label{Di8}
G_j:= \{\bk\in \bbZ^d\,:\, \|\bk\|_\infty <2^j\},\quad j=1,2,\dots.
\ee

We now define classes $\bW^{a,b}_{A_\bt}(\Psi)$, which were introduced and studied in \cite{VT205}. 
For 
\be\label{R9a}
f=\sum_{\bk}a_\bk(f) \psi_\bk,\qquad \sum_{\bk}|a_\bk(f) | <\infty,
\ee
 denote
$$
\delta_\bs(f,\Psi):= \sum_{\bk\in \rho(\bs)}a_\bk(f) \psi_\bk,\quad f_j:=\sum_{\|\bs\|_1=j}\delta_\bs(f,\Psi), \quad j\in \bbN_0,\quad \bbN_0:=\bbN\cup \{0\}
$$
and   for $\bt\in (0,1]$
$$
|f|_{A_\bt(\Psi)} := \left(\sum_{\bk}|a_\bk(f)|^\bt\right)^{1/\bt}.
$$
Note, that if representations (\ref{R9a}) are unique, then in the case $\bt=1$ the characteristic $|f|_{A_\bt(\Psi)}$ is the norm and in the case $\bt \in (0,1)$ it is a quasi-norm.   
For parameters $ a\in \bbR_+$, $ b\in \bbR$ define the class $\bW^{a,b}_{A_\bt}(\Psi)$ of functions $f$ for which 
there exists a representation (\ref{R9a}) satisfying
\be\label{R10}
 |f_j|_{A_\bt(\Psi)} \le 2^{-aj}(\bar j)^{(d-1)b},\quad \bar j:=\max(j,1), \quad j\in \bbN_0.
\ee

In the case, when $\Psi$ is the trigonometric system and $\bt=1$, classes $\bW^{a,b}_{A_\bt}(\Psi)$
were introduced in \cite{VT150}. The general definition in the case $\bt=1$ is given in \cite{DTM2}. We use the notation $A$ in place of $A_1$. Thus, $\bW^{a,b}_{A}(\Psi):= \bW^{a,b}_{A_1}(\Psi)$. Note that the $\bW^{a,b}_{A_\bt}(\Psi)$ classes can be seen as the $\bW$ type classes with the structural condition on the coefficients in the quasi-norm $A_\bt$. 

We now define an analog of the $\bW^{a,b}_{A_\bt}(\Psi)$ classes in a style of the $\bH$ classes. For parameters $ a\in \bbR_+$, $ b\in \bbR$ define the class $\bH^{a,b}_{A_\bt}(\Psi)$ of functions $f$ for which 
there exists a representation (\ref{R9a}) satisfying
\be\label{R11}
 |\delta_\bs(f,\Psi)|_{A_\bt(\Psi)} \le 2^{-aj}(\bar j)^{(d-1)b},\quad \bar j:=\max(j,1), \quad j\in \bbN_0,\quad \|\bs\|_1=j.
\ee

Note, that the following embedding follows directly from the definitions of the classes  
\be\label{emb}
\bH^{a,b}_{A_\bt} \hookrightarrow \bW^{a,b'}_{A_\bt} \quad \text{with} \quad   b'=b+1/\bt.
\ee

We will need some simple properties of the quasi-norms $|\cdot|_{A_\bt}$. 

\begin{Proposition}\label{SP1} Assume that $\Psi$ is a uniformly bounded ($\|\psi_\bk\|_\infty \le B$, $\bk\in\bbZ^d$)
orthonormal system. Denote for $\bN = (N_1,\dots,N_d)$, $N_j\in \bbN_0$, $j=1,\dots,d$, 
$$
\Psi(\bN,d) := \left\{f=\sum_{\bk\in \Pi(\bN,d)} c_\bk \psi_\bk\right\},\quad \vartheta(\bN) :=\prod_{j=1}^d (2N_j+1). 
$$
Then for $q\in [1,2]$ we have for $f\in \Psi(\bN,d)$
$$
|f|_A \le B^{2/q-1} \vartheta(\bN)^{1/q}\|f\|_q.
$$
\end{Proposition}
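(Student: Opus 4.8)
The plan is to view the coefficient map $T\colon f\mapsto (a_\bk(f))_\bk$ and to prove the bound by Riesz--Thorin interpolation between the two elementary endpoints $q=1$ and $q=2$, where the estimate is immediate. First I would record that, since $\Psi$ is orthonormal, the representation of $f\in\Psi(\bN,d)$ is unique, with $a_\bk(f)=\int_\Omega f\overline{\psi_\bk}\,d\mu$, and these coefficients vanish off $\Pi(\bN,d)$; write $n:=\vartheta(\bN)=|\Pi(\bN,d)|$. Thus $T$ maps $f$ to a sequence supported on $n$ indices, and $|f|_A=\sum_{\bk\in\Pi(\bN,d)}|a_\bk(f)|=\|Tf\|_{\ell_1}$, so the whole statement is the operator-norm bound $\|T\colon L_q\to\ell_1\|\le B^{2/q-1}n^{1/q}$.

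For the endpoints I would argue directly. For $q=2$, orthonormality gives Parseval $\|Tf\|_{\ell_2}=\|f\|_2$ (an equality on the finite-dimensional span), and Cauchy--Schwarz over the $n$ supported indices yields $\|Tf\|_{\ell_1}\le n^{1/2}\|Tf\|_{\ell_2}=n^{1/2}\|f\|_2$; note the target constant is $B^{2/q-1}=B^0=1$, so this is exactly the claim at $q=2$. For $q=1$, uniform boundedness gives the pointwise coefficient estimate $|a_\bk(f)|=\bigl|\int_\Omega f\overline{\psi_\bk}\,d\mu\bigr|\le\|\psi_\bk\|_\infty\|f\|_1\le B\|f\|_1$, and summing over the $n$ indices gives $\|Tf\|_{\ell_1}\le nB\|f\|_1$, matching the target $B^{2/q-1}n^{1/q}=Bn$ at $q=1$. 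So $T$ has norm at most $M_1:=Bn$ as a map $L_1\to\ell_1$ and at most $M_2:=n^{1/2}$ as a map $L_2\to\ell_1$.

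Now I would interpolate. Choosing $\theta\in[0,1]$ with $1/q=(1-\theta)\cdot 1+\theta\cdot\tfrac12$, i.e. $\theta=2-2/q$ (so $1-\theta=2/q-1$), Riesz--Thorin gives $\|T\colon L_q\to\ell_1\|\le M_1^{1-\theta}M_2^{\theta}=(Bn)^{2/q-1}\bigl(n^{1/2}\bigr)^{2-2/q}$. The bookkeeping of the powers of $n$ is the only thing to check: the exponent is $(2/q-1)+\tfrac12(2-2/q)=(2/q-1)+(1-1/q)=1/q$, and the power of $B$ is $2/q-1$, giving precisely $B^{2/q-1}n^{1/q}\|f\|_q=B^{2/q-1}\vartheta(\bN)^{1/q}\|f\|_q$, as required. (Equivalently, one can split the proof modularly as H\"older on the finite index set, $|f|_A\le\vartheta(\bN)^{1/q}\,|f|_{A_{q'}}$ with $1/q+1/q'=1$, followed by a generalized Hausdorff--Young inequality $|f|_{A_{q'}}\le B^{2/q-1}\|f\|_q$; the latter is itself the interpolation of $\|\cdot\|_{\ell_2}\le\|\cdot\|_2$ and $\|\cdot\|_{\ell_\infty}\le B\|\cdot\|_1$.)

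The main obstacle is essentially only the constant: getting \emph{exactly} $B^{2/q-1}$ rather than a cruder power of $B$ requires using the sharp endpoint operator norms ($1$ at $L_2$ and $B$ at $L_1$) and the complex interpolation theorem, together with the exponent arithmetic above. Everything else is routine once one observes that working inside the finite-dimensional space $\Psi(\bN,d)$ makes the coefficient sequence finitely supported, so that Parseval holds with equality and no convergence issues arise.
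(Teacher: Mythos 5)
Your proof is correct, but it takes a different route from the paper's. The paper argues by duality: writing $v_\bk:=\sign a_\bk(f)$ and $D_{\bN,\Psi}(\bv,\cdot):=\sum_{\bk\in\Pi(\bN,d)}v_\bk\psi_\bk$, orthonormality gives $|f|_A=\int_\Omega D_{\bN,\Psi}(\bv,\bx)\bar f\,d\mu$, H\"older gives $|f|_A\le\|D_{\bN,\Psi}(\bv,\cdot)\|_{q'}\|f\|_q$, and then the extremal polynomial is estimated via the log-convexity inequality $\|g\|_{q'}\le\|g\|_2^{2/q'}\|g\|_\infty^{1-2/q'}$ together with $\|D_{\bN,\Psi}(\bv,\cdot)\|_2=\vartheta(\bN)^{1/2}$ and $\|D_{\bN,\Psi}(\bv,\cdot)\|_\infty\le B\vartheta(\bN)$; everything reduces to two applications of H\"older's inequality and no interpolation theorem is invoked. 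You instead interpolate the coefficient operator $T\colon f\mapsto(a_\bk(f))$ between the endpoints $\|T\|_{L_1\to\ell_1}\le B\vartheta(\bN)$ and $\|T\|_{L_2\to\ell_1}\le\vartheta(\bN)^{1/2}$ via Riesz--Thorin, with the same exponent arithmetic and the same sharp constant $B^{2/q-1}\vartheta(\bN)^{1/q}$. Your endpoints are exactly dual to the paper's two norm evaluations of $D_{\bN,\Psi}(\bv,\cdot)$, so the two arguments are morally equivalent; the trade-off is that the paper's version is entirely elementary (it never leaves H\"older's inequality), while yours leans on the three-lines machinery but is more modular and immediately yields the intermediate Hausdorff--Young-type bound $|f|_{A_{q'}}\le B^{2/q-1}\|f\|_q$ as a reusable statement. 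One small point worth making explicit in your write-up: the $q=2$ endpoint for general $f\in L_2$ is Bessel's inequality $\|Tf\|_{\ell_2}\le\|f\|_2$ rather than Parseval's equality (equality only holds on the span $\Psi(\bN,d)$), but since only the upper bound is needed for Riesz--Thorin this does not affect the argument.
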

\begin{proof} Denote for an array $\bv = \{v_\bk\}_{\bk\in \Pi(\bN,d)}$, $|v_\bk|=1$, $\bk\in \Pi(\bN,d)$,
$$
D_{\bN,\Psi}(\bv,\bx) := \sum_{\bk\in \Pi(\bN,d)} v_\bk \psi_\bk(\bx).
$$
Let $f$ have the representation (\ref{R9a}). By the orthonormality assumption we have
\be\label{S1}
|f|_A = \sum_{\bk\in \Pi(\bN,d)} |a_\bk(f)| = \int_\Omega  D_{\bN,\Psi}(\bv,\bx) {\bar f} d\bx,
\ee
where $v_\bk := \sign a_\bk(f) := a_\bk(f)/|a_\bk(f)|$, if $a_\bk \neq 0$ and $v_\bk =1$ if $a_\bk = 0$; 
${\bar f}$ is the complex conjugate to $f$. From (\ref{S1}) we obtain
\be\label{S2}
|f|_A \le \|D_{\bN,\Psi}(\bv,\cdot)\|_{q'}\|\bar f\|_q,\quad q' := q/(q-1).
\ee
Using simple relations
$$
\|D_{\bN,\Psi}(\bv,\cdot)\|_2 = \vartheta(\bN)^{1/2},\qquad \|D_{\bN,\Psi}(\bv,\cdot)\|_\infty \le B \vartheta(\bN)
$$
and the well known inequality $\|g\|_p \le \|g\|_2^{2/p}\|g\|_\infty^{1-2/p}$, $p\in [2,\infty)$, we get
\be\label{S3}
 \|D_{\bN,\Psi}(\bv,\cdot)\|_{q'} \le B^{2/q-1} \vartheta(\bN)^{1/q}.
\ee
Combining (\ref{S2}) and (\ref{S3}), we complete the proof.

\end{proof}

Let $\bt\in (0,1)$. Then for any set of numbers $\{y_j\}_{j=1}^M$ we have by the H{\"o}lder inequality
\be\label{S4}
\sum_{j=1}^M |y_j|^\bt \le \left(\sum_{j=1}^M |y_j|\right)^{\bt} M^{1-\bt}.
\ee

Inequality (\ref{S4}) and Proposition \ref{SP1} imply the following Corollary \ref{SC1}.

\begin{Corollary}\label{SC1} Under assumptions of Proposition \ref{SP1} for $q\in [1,2]$ and $\bt\in(0,1]$  we have for $f\in \Psi(\bN,d)$
$$
|f|_{A_\bt} \le \vartheta(\bN)^{1/\bt-1} |f|_A \le B^{2/q-1} \vartheta(\bN)^{1/\bt -1 +1/q}\|f\|_q.
$$
\end{Corollary}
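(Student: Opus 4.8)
The plan is to chain the two ingredients that immediately precede the statement: the elementary H\"older bound (\ref{S4}) and Proposition \ref{SP1}. The first inequality in the Corollary is purely combinatorial and uses neither the orthonormality nor the boundedness of $\Psi$; only the second inequality invokes Proposition \ref{SP1}.

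First I would establish the left inequality $|f|_{A_\bt}\le \vartheta(\bN)^{1/\bt-1}|f|_A$. Fix $f\in\Psi(\bN,d)$, so that in the representation (\ref{R9a}) the coefficients $a_\bk(f)$ are supported on $\Pi(\bN,d)$, a set of cardinality exactly $M=\vartheta(\bN)$. For $\bt\in(0,1)$ I would apply (\ref{S4}) to the finite family $\{a_\bk(f)\}_{\bk\in\Pi(\bN,d)}$ with this value of $M$, obtaining
$$
\sum_{\bk\in\Pi(\bN,d)}|a_\bk(f)|^\bt \le \Bigl(\sum_{\bk\in\Pi(\bN,d)}|a_\bk(f)|\Bigr)^\bt \vartheta(\bN)^{1-\bt} = |f|_A^{\,\bt}\,\vartheta(\bN)^{1-\bt}.
$$
Raising both sides to the power $1/\bt$ and recalling $|f|_{A_\bt}=\bigl(\sum_\bk|a_\bk(f)|^\bt\bigr)^{1/\bt}$ gives
$$
|f|_{A_\bt}\le |f|_A\,\vartheta(\bN)^{(1-\bt)/\bt}=\vartheta(\bN)^{1/\bt-1}|f|_A,
$$
which is the first claimed inequality. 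The boundary case $\bt=1$ is trivial, since then both sides equal $|f|_A$ and $\vartheta(\bN)^{1/\bt-1}=1$; this deserves a one-line mention because (\ref{S4}) is stated only for $\bt\in(0,1)$.

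For the second inequality I would simply substitute the bound of Proposition \ref{SP1}, valid for $q\in[1,2]$ and $f\in\Psi(\bN,d)$, namely $|f|_A\le B^{2/q-1}\vartheta(\bN)^{1/q}\|f\|_q$, into the estimate just obtained:
$$
|f|_{A_\bt}\le \vartheta(\bN)^{1/\bt-1}\,B^{2/q-1}\vartheta(\bN)^{1/q}\|f\|_q = B^{2/q-1}\vartheta(\bN)^{1/\bt-1+1/q}\|f\|_q,
$$
which is exactly the right-hand inequality. There is essentially no obstacle here: the whole content is the observation that passing from the $A$-quasi-norm to the $A_\bt$-quasi-norm on a coordinate space of dimension $\vartheta(\bN)$ costs at most the factor $\vartheta(\bN)^{1/\bt-1}$, a direct consequence of H\"older's inequality. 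The only points demanding any care are the correct identification of $M$ with the cardinality $\vartheta(\bN)=\prod_{j=1}^d(2N_j+1)$ of $\Pi(\bN,d)$ and the separate (trivial) treatment of $\bt=1$.
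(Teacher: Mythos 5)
Your proof is correct and is exactly the argument the paper intends: the paper derives Corollary \ref{SC1} by combining the H\"older bound (\ref{S4}) with $M=\vartheta(\bN)$ and Proposition \ref{SP1}, precisely as you do. The extra remark on the trivial case $\bt=1$ is a reasonable touch of care but changes nothing.
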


We now proceed to the main result of this section -- the lower bound for $\rho^o_m(\bH^{a,b}_{A_\bt},L_p)$. 

\begin{Theorem}\label{ST1} Let $a>0$ and $b\in \bbR$. Then for $\bt\in (0,1]$ and $p\in [2,\infty)$
$$
\rho^o_m(\bH^{a,b}_{A_\bt}(\cT^d),L_p) \gg m^{1-1/p-1/\bt-a}(\log m)^{(d-1)(b+1/p)}.
$$
\end{Theorem}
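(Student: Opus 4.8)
The plan is to run the construction from the proof of Lemma \ref{qpL2} almost verbatim, replacing the estimate of $\|A_\bs(f)\|_q$ by an estimate of the quasi-norm $|\delta_\bu(f)|_{A_\bt}$ via Corollary \ref{SC1}. First I would reduce to a single dyadic scale. Given $m$, choose the largest $n$ divisible by $3$ with $m\le S_n/2$; since $S_n\asymp 2^n$ this gives $2^n\asymp m$ and $n\asymp \log m$. For such $n$ the subspaces $T(\xi,\bs):=\{f\in\cT(\rho(\bs)):f(\xi^\nu)=0,\ \nu=1,\dots,m\}$ are nontrivial for every $\bs$ with $\|\bs\|_1=n$, because $\dim\cT(\rho(\bs))=|\rho(\bs)|\ge S_n>m$. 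For each $\bs\in Y_{n,3}$ I then pick $g_{\xi,\bs}\in T(\xi,\bs)$ with $\|g_{\xi,\bs}\|_\infty=1$ attained at $\bx^*_\bs$, and set $t_\bs:=g_{\xi,\bs}\,\cK_{2^{\bs-2}}(\cdot-\bx^*_\bs)$ and $f:=\sum_{\bs\in Y_{n,3}}t_\bs$ exactly as in (\ref{Rf}), so that $f(\xi^\nu)=0$.

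The two estimates I need are a lower bound $\|f\|_p\gg 2^{n(1-1/p)}n^{(d-1)/p}$ and an upper bound $\max_\bu|\delta_\bu(f)|_{A_\bt}\ll 2^{n/\bt}$. The lower bound is identical to (\ref{R21}): the divisibility-by-$3$ condition makes the dyadic blocks of the $t_\bs$ disjoint, (\ref{FKm}) gives $|t_\bs(\bx^*_\bs)|\gg 2^n$ and hence a block $\bu(\bs)$ with $\|\bu(\bs)\|_1\asymp n$ and $\|\delta_{\bu(\bs)}(t_\bs)\|_\infty\gg 2^n$, and then (\ref{1.2}) of Theorem \ref{T1.1} with $u=p$, $v=\infty$ (valid since $p>1$), summed over the $\asymp n^{d-1}$ elements of $Y_{n,3}$, yields the claim. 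For the quasi-norm bound the same disjointness gives $\delta_\bu(f)=\delta_\bu(t_\bs)$ for at most one $\bs$, so it suffices to bound $|t_\bs|_{A_\bt}$, using $|\delta_\bu(t_\bs)|_{A_\bt}\le|t_\bs|_{A_\bt}$ (true since $\bt>0$ and we are summing $|\hat t_\bs(\bk)|^\bt$ over a subset of frequencies).

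Now $t_\bs\in\Psi(\bN_\bs,d)$ with $\Psi=\cT^d$ orthonormal and uniformly bounded ($B=1$) and $\vartheta(\bN_\bs)\asymp 2^{\|\bs\|_1}=2^n$. Corollary \ref{SC1} at the endpoint $q=1$ then gives $|t_\bs|_{A_\bt}\le\vartheta(\bN_\bs)^{1/\bt}\|t_\bs\|_1\ll 2^{n/\bt}$, where $\|t_\bs\|_1\le\|g_{\xi,\bs}\|_\infty\|\cK_{2^{\bs-2}}\|_1=1$ by (\ref{FKm}). Since $\delta_\bu(f)=0$ once $\|\bu\|_1$ leaves an $O(d)$-neighbourhood of $n$, on the active blocks the class constraint reads $2^{-a\|\bu\|_1}(\overline{\|\bu\|_1})^{(d-1)b}\asymp 2^{-an}n^{(d-1)b}$, so $h:=f/\lambda$ with $\lambda\asymp 2^{n(1/\bt+a)}n^{-(d-1)b}$ lies in $\bH^{a,b}_{A_\bt}(\cT^d)$, and so does $-h$. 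Dividing the two estimates gives $\|h\|_p\gg 2^{n(1-1/p-1/\bt-a)}n^{(d-1)(b+1/p)}\asymp m^{1-1/p-1/\bt-a}(\log m)^{(d-1)(b+1/p)}$. The standard two-point argument from the end of Lemma \ref{qpL2} then closes the proof: for any $\cM$, with $g_0=\cM(\mathbf 0)$ and $h(\xi^\nu)=-h(\xi^\nu)=0$, one has $\max(\|h-g_0\|_p,\|-h-g_0\|_p)\ge\|h\|_p$.

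The main obstacle is the $A_\bt$ estimate. One must notice that Corollary \ref{SC1} at $q=1$ produces precisely the factor $\vartheta^{1/\bt}$ and that this pairs with $\|t_\bs\|_1\le 1$, so the resulting bound $2^{n/\bt}$ is free of any auxiliary exponent and carries only the volume $\vartheta\asymp 2^n$. Keeping the bookkeeping of the $\bt$- and logarithmic exponents correct in the final division is exactly what pins down the power $1-1/p-1/\bt-a$ and the log-power $(d-1)(b+1/p)$; everything else is a transcription of the trigonometric construction already carried out for Lemma \ref{qpL2}.
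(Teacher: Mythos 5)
Your proposal is correct and follows essentially the same route as the paper: the paper's proof of Theorem \ref{ST1} also takes the function $f$ from (\ref{Rf}) in Lemma \ref{qpL2}, bounds $|t_\bs|_{A_\bt}\ll 2^{n/\bt}$ via (\ref{R19}) and Corollary \ref{SC1}, normalizes by $2^{-n(a+1/\bt)}n^{(d-1)b}$ to land in $\bH^{a,b}_{A_\bt}$, and concludes with (\ref{R21}) and the two-point symmetry argument. Your choice of the endpoint $q=1$ in Corollary \ref{SC1} (rather than a general $q\in[1,2]$) is immaterial, since the exponents cancel to give the same bound $2^{n/\bt}$ in either case.
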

\begin{proof} Let $f$ be the function defined by (\ref{Rf}) in the proof of Lemma \ref{qpL2}. Then by (\ref{R19}) and 
Corollary \ref{SC1} we obtain
\be\label{S5}
|t_\bs|_{A_\bt} \ll  2^{n/\bt},
\ee
which easily implies that there exists a positive constant  $c$ independent of $n$ such that $c2^{n(-a-1/\bt)} n^{(d-1)b}f \in 
\bH^{a,b}_{A_\bt}$. We now use (\ref{R21}) and complete the proof in the same way as it has been done in the proof of 
Lemma \ref{qpL2}. 
\end{proof}

The following upper bound was proved in \cite{VT205} for $p\in [2,\infty)$ 
\begin{equation}\label{Mtr}
 \varrho_{m}^{o}(\bW^{a,b}_{A_\bt}(\cT^d),L_p(\bbT^d)) \ll   \left(\frac{m}{(\log m)^3}\right)^{1-1/p -1/\bt -a}  (\log(m))^{(d-1)(a+b)} .
\end{equation}

Note that Theorem \ref{ST1} does not cover the case $p\in [1,2)$. We now present the corresponding lower bound in the case $p=1$. 
In the same way as we derived Theorem \ref{ST1} from the example built in the proof of Lemma \ref{qpL2} we can derive the following lower bound from the example presented in Section~\ref{q1}
\be\label{S7}
\rho^o_m(\bH^{a,b}_{A_\bt}(\cT^d),L_1) \gg m^{1/2-1/\bt-a}(\log m)^{(d-1)(b+1)}.
\ee
This bound, the upper bound (\ref{Mtr}) with $p=2$, and the embedding (\ref{emb}) show that the characteristics 
$\rho^o_m(\bH^{a,b}_{A_\bt}(\cT^d),L_p)$ have the same power decay $m^{1/2-1/\bt-a}$ for all $p\in [1,2]$. 
 
 {\bf Comment \ref{S}.1.} Similarly to Comment \ref{q1}.1 we have the following analog of the bound (\ref{S7})
\be\label{S8}
c_m(\bH^{a,b}_{A_\bt}(\cT^d),L_1) \gg m^{1/2-1/\bt-a}(\log m)^{(d-1)(b+1)}.
\ee

\section{Discussion}
\label{D}

In this paper we focus on the study of the lower bounds for the nonlinear characteristic  $\varrho_m^o(W,L_p)$. 
Most of the known results on optimal sampling recovery deal with the linear recovery methods. Recall the setting 
 of the optimal linear recovery. For a fixed $m$ and a set of points  $\xi:=\{\xi^j\}_{j=1}^m\subset \Omega$, let $\Phi $ be a linear operator from $\bbC^m$ into $L_p(\Omega,\mu)$.
Denote for a class $W$ (usually, centrally symmetric and compact subset of $L_p(\Omega,\mu)$) 
$$
\varrho_m(W,L_p) := \inf_{\text{linear}\, \Phi; \,\xi} \sup_{f\in W} \|f-\Phi(f(\xi^1),\dots,f(\xi^m))\|_p.
$$
The characteristic $\varrho_m(W,L_p)$ was introduced and studied in \cite{VT51}. The reader can find a detailed discussion of results on $\varrho_m(W,L_p)$ in the books \cite{DTU} and \cite{VTbookMA}. Recently, a substantial progress in estimating $\varrho_m(W,L_p)$ and $\varrho_m^o(W,L_p)$ (mostly, the case of recovery in the $L_2$ norm was studied) was made in the papers \cite{KU}, \cite{KU2},
\cite{NSU}, \cite{VT183},  \cite{TU1},  
\cite{JUV}, \cite{DKU},  \cite{DTM1}, \cite{DTM2}, \cite{DTM3},  
   \cite{VT203}, \cite{KoTe}, \cite{VT205}. 
   
We have an obvious inequality
\be\label{D1}
\varrho_m^o(W,L_p) \le \varrho_m(W,L_p),
\ee   
which means that the upper bounds for $\varrho_m(W,L_p)$ serve as upper bounds for $\varrho_m^o(W,L_p)$ and 
the lower bounds for $\varrho_m^o(W,L_p)$ serve as lower bounds for $\varrho_m(W,L_p)$. It is an interesting problem to understand for which function classes $W$ the rates of decay of the characteristics $\varrho_m(W,L_p)$ and $\varrho_m^o(W,L_p)$ coincide. It is known that even in the classical case of classes $\bW^r_q$, $1<q<2$, the rates of 
$\varrho_m(\bW^r_q,L_2)$ and $\varrho_m^o(\bW^r_q,L_2)$ do not coincide. It was observed in \cite{JUV} that in the case $1<q<2$ for large enough $d$ the upper bound for $\varrho_m^o(\bW^r_q,L_2)$ and the known lower bound for $\varrho_m(\bW^r_q,L_2)$ imply that $\varrho_m^o(\bW^r_q,L_2) =o(\varrho_m(\bW^r_q,L_2))$. This means that in those cases nonlinear methods give better rate of  decay of errors of sampling recovery than linear methods do. We now discuss this important phenomenon in detail. 
We begin with the upper bounds (\ref{H6}) and (\ref{H7}) for $\varrho_m^o(\bW^r_q,L_2)$ given in the Introduction.

 For instance, (\ref{H7}) and (\ref{H8}) imply
\be\label{H8a}
\varrho_{m}^{o}(\bW^{r}_q,L_2(\bbT^d)) \ll  \left(\frac{m}{(\log m)^3}\right)^{-r+1/q-1/2} (\log m)^{(d-1)(r+1-2/q)}  .    
\end{equation}

We now proceed to the lower bounds for $\varrho_{m}(\bW^{r}_q,L_2(\bbT^d))$. It is obvious that 
\be\label{Kol}
d_m(W,L_p) \le \varrho_{m}(W,L_p),
\ee
where $d_m(W,X)$ is the Kolmogorov width of $W$ in a Banach space $X$:
$$
d_m(W,X) := \inf_{Y\subset X, \dim Y\le m} \sup_{f\in W} \inf_{y\in Y} \|f-y\|_X.
$$
Here are the known bounds on the $d_m (\bW^r_q, L_2)$ (see, for instance, \cite{VTbookMA}, p.216): For $1<q\le 2$, $r > 1/q-1/2$ and $2<q<\infty$, $r> 0$ 
\be\label{H6a}
d_m(\bW^r_q,L_2) \asymp \left(\frac{(\log m)^{d-1})}{m}\right)^{r-(1/q-1/2)_+},\quad (a)_+ := \max(a,0).
\ee
Taking into account that $r+1-2/q < r+1/2-1/q$ for $1<q<2$ we conclude from (\ref{H8a}), (\ref{Kol}), and (\ref{H6a}) that 
for large enough $d$ we have $\varrho_m^o(\bW^r_q,L_2) =o(\varrho_m(\bW^r_q,L_2))$.

It is interesting to point out that we do not know if the above effect, which holds for the $\bW$ classes, holds for the $\bH$ classes as well. The following upper bound is known (see, for instance, \cite{VTbookMA}, p.308) for the linear recovery in the case $1\le q<p <\infty$, $r >1/q$,
\be\label{D6}
\varrho_{m}(\bH^{r}_q,L_p(\bbT^d)) \ll  \left(\frac{m}{(\log m)^{d-1}}\right)^{-r+1/q-1/p} (\log m)^{(d-1)/p}.
\ee
In the case $p\ge2$ the bound (\ref{D6}) is better than the bound (\ref{In6}) from Introduction. This means that  in the case $1< q \le 2 \le p<\infty$ the known upper bounds for the linear recovery are better than those in (\ref{In6}).

Let us make some comments on the technique available for proving the lower bounds for the optimal recovery. In the case 
of linear recovery we can use the inequality (\ref{Kol}) or even a stronger one with the Kolmogorov width replaced by 
the linear width. This theory is well developed (see, for instance, the books \cite{DTU} and \cite{VTbookMA}). 
In the case of nonlinear recovery we cannot use the above approach. It seems like the only technique, which is available now, is based on the following simple observation. 

 \begin{Proposition}\label{DP1} Let $W\subset X$ be a symmetric ($f\in W \Rightarrow -f \in W$) subset, consisting of continuous functions. Then
$$
\varrho_m^o(W,X) \ge \inf_{\xi^1,\dots,\xi^m} \sup_{f\in W\,:\, f(\xi^j)=0,\, j=1,\dots,m} \|f\|_X.
$$
\end{Proposition}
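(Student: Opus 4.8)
The plan is to exploit the fact that a recovery scheme, once the sampling points $\xi^1,\dots,\xi^m$ are fixed, cannot distinguish a function vanishing at all those points from the zero function, together with the symmetry of $W$. First I would fix an arbitrary set of sampling points $\xi^1,\dots,\xi^m$ and an arbitrary mapping $\cM:\bbC^m\to X$. The key observation is that any function $f\in W$ with $f(\xi^j)=0$ for all $j$ produces the same data vector $(f(\xi^1),\dots,f(\xi^m))=(0,\dots,0)$ as the zero function, so the reconstruction $\cM(0,\dots,0)=:g_0\in X$ is a single element of $X$ that the scheme is forced to return for \emph{every} such $f$.

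Now I would use symmetry. Since $W$ is symmetric, whenever $f\in W$ vanishes at all the sampling points, so does $-f\in W$, and both yield the data vector $0$ and hence the same reconstruction $g_0$. Applying the triangle inequality to the fixed target $g_0$ gives
$$
\|f-g_0\|_X + \|-f-g_0\|_X \ge \|(f-g_0)-(-f-g_0)\|_X = 2\|f\|_X,
$$
so that $\max\bigl(\|f-g_0\|_X,\|-f-g_0\|_X\bigr)\ge \|f\|_X$. Therefore, for this fixed $\xi$ and $\cM$,
$$
\sup_{h\in W}\|h-\cM(h(\xi^1),\dots,h(\xi^m))\|_X \ge \|f\|_X
$$
for every $f\in W$ vanishing at the sampling points. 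Taking the supremum over all such $f$ shows that the worst-case error of this scheme is at least $\sup_{f\in W:\,f(\xi^j)=0}\|f\|_X$.

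Finally I would take the infimum over all mappings $\cM$ and all point sets $\xi$. The right-hand side bound just obtained does not depend on $\cM$ at all, and for each fixed $\xi$ it equals $\sup_{f\in W:\,f(\xi^j)=0}\|f\|_X$; taking the infimum over $\xi$ yields exactly the claimed inequality for $\varrho_m^o(W,X)$. There is no real obstacle here: the argument is elementary and the only point requiring a small amount of care is ensuring that the function achieving (or approximately achieving) the inner supremum genuinely lies in $W$ and is continuous so that the point evaluations $f(\xi^j)$ are well defined, which is guaranteed by the hypothesis that $W$ consists of continuous functions.
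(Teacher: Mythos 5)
Your argument is correct and coincides with the paper's: the author does not give a standalone proof of Proposition \ref{DP1}, but applies exactly this reasoning inside the proof of Lemma \ref{qpL2}, setting $g_0 := \cM(\mathbf 0)$ and using $\|h-g_0\|_p + \|-h-g_0\|_p \ge 2\|h\|_p$ together with the symmetry of the class. Your handling of the quantifiers (sup over admissible $f$, then inf over $\cM$ and $\xi$) is also the intended one, so there is nothing to add.
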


{\bf Acknowledgements.}
This work was supported by Ministry of Science and Higher Education of 
the Russian Federation (Grant No. 075-15-2024-529).

  \Addresses


\begin{thebibliography}{9999}
 
 %\bibitem{Bour} J. Bourgain, An improved estimate in the restricted isometry problem, In Geometric Aspects of Functional Analysis, volume 2116 of Lecture Notes in Mathematics, pages 65--70. Springer, 2014.
 
%\bibitem{CM} A. Cohen and G. Migliorati, Optimal weighted least-squares methods, {\it SMAI J. Computational Mathematics} {\bf 3} (2017), 181--203.
 
 %\bibitem{CoDo} A. Cohen and M. Dolbeault, Optimal pointwise sampling for $L^2$ approximation, Journal of Complexity, 2022, 68, 101602;
%arXiv:2105.05545v2 [math.NA] 1 Jun 2021.


\bibitem{DKU} M. Dolbeault , D. Krieg, and M. Ullrich, A sharp upper bound for sampling numbers in $L_2$, {\it Appl. Comput. Harmon. Anal.} {\bf  63} (2023), 113--134;
arXiv:2204.12621v1 [math.NA] 26 Apr 2022.

 % \bibitem{DPTT} F. Dai, A. Prymak, V.N. Temlyakov, and  S.U. Tikhonov,  
 % Integral norm discretization and related problems,  (Russian) {\it Uspekhi Mat. Nauk  }{\bf 74} (2019), no. 4(448), 3–58; translation in {\it Russian Math. Surveys} {\bf  74 } (2019), no. 4, 579--630 .
 
%\bibitem{DT} F. Dai and V.N.  Temlyakov, Universal sampling discretization, {\it Constr. Approx.} (2023). Published: 25-04-2023. 
%https://doi.org/10.1007/s00365-023-09644-2.

 \bibitem{DTM1} F.  Dai and V.N.  Temlyakov, Universal discretization and sparse sampling recovery,
 arXiv:2301.05962v1 [math.NA] 14 Jan 2023.
 
 \bibitem{DTM2} F. Dai and V.N. Temlyakov, Random points are good for universal discretization, J. Math. Anal. Appl., {\bf 529} (2024) 127570; arXiv.2301.12536[math.FA] 5 Feb 2023.
 
\bibitem{DTM3} F. Dai and V.N. Temlyakov, Lebesgue-type inequalities in sparse sampling recovery,
arXiv:2307.04161v1 [math.NA] 9 Jul 2023.

 \bibitem{DTU} Dinh D{\~u}ng, V.N. Temlyakov, and T. Ullrich, Hyperbolic Cross Approximation, Advanced Courses in Mathematics CRM Barcelona, Birkh{\"a}user, 2018; arXiv:1601.03978v2 [math.NA] 2 Dec 2016.
  
 %\bibitem{DGHKT} S. Dilworth, G. Garrig{\' o}s, E. Hern\'andez, D. Kutzarova, and V. Temlyakov, Lebesgue-type inequalities in greedy approximation, {\it J. Functional Analysis}  {\bf 280} (2021), Paper No. 108885.  
 
%  \bibitem{FR} S. Foucart and H. Rauhut, A Mathematical Introduction to Compressive Sensing,
 % Birkh{\"a}user, 2013. 
  
 %   \bibitem{HR} I. Haviv and O. Regev, The restricted isometry property of subsampled Fourier matrices, In Geometric aspects of functional analysis, volume 2169 of Lecture Notes in Math., pages 163--179. Springer, Cham, 2017.
 
 \bibitem{JUV} T. Jahn, T. Ullrich, and F. Voigtlaender, Sampling numbers of smoothness classes via 
 $\ell^1$-minimization, arXiv:2212.00445v3 [math.NA] 31 Jul 2023. 
 
% \bibitem{KaTe} B.S. Kashin and V.N. Temlyakov, A remark on compressed sensing, Math. Notes, 82:6 (2007), 748--755.   
 
% \bibitem{KKLT} B.S.  Kashin, E. Kosov, I. Limonova, and V.N.  Temlyakov, Sampling discretization and related problems,
 %{\it J. Complexity} {\bf  71} (2022), Paper No. 101653.
 
%\bibitem{KKT} B. S.  Kashin, S. Konyagin, and V.N. Temlyakov, Sampling discretization of the uniform norm, {\it  Constr. Approx.}{\bf  57} (2023), no. 2, 663--694.
 
 
% \bibitem{KUV} L. K{\"a}mmerer, T. Ullrich, and T. Volkmer, Worst-case recovery guarantees for least squares approximation using random samples,
%Constructive Approximation, {\bf 54} (2021), 295--352.	

\bibitem{KoTe} E. Kosov and V. Temlyakov, Bounds for the sampling discretization error and their applications to universal sampling discretization, arXiv:2312.05670v2 [math.NA] 27 Jan 2024.
 
 \bibitem{KU} D. Krieg and M. Ullrich, Function values are enough for $L_2$-ap\-pro\-ximation, {\it Found. Comp. Math.}, doi:10.1007/s10208-020-09481-w;
arXiv:1905.02516v4 [math.NA] 19 Mar 2020.

\bibitem{KU2} D. Krieg and M. Ullrich, Function values are enough for $L_2$-approximation: Part II, {\it J. Complexity}, doi:10.1016/j.jco.2021.101569; arXiv:2011.01779v1 [math.NA] 3 Nov 2020.
 
% \bibitem{LimT} I. Limonova and V. Temlyakov, On sampling discretization in $L_2$,  {\it J. Math. Anal. Appl.} {\bf  515} (2022), no. 2, Paper No. 126457.
 
%\bibitem{LMT} I. Limonova, Yu. Malykhin, and V. Temlyakov, One-sided discretization inequalities and sampling recovery, arXiv:2402.00848v1 [math.NA] 1 Feb 2024.

  %\bibitem{LT} E. Livshitz and V. Temlyakov,  Sparse approximation and recovery by greedy algorithms, IEEE Transactions on Information Theory, {\bf 60} (2014), 3989--4000;
%arXiv: 1303.3595v1 [math.NA] 14 Mar 2013.
 
 \bibitem{NSU} N. Nagel, M. Sch{\"a}fer, T. Ullrich, A new upper bound for sampling numbers, {\it Found. Comp. Math.}, Pub Date: 2021-04-26, DOI: 10.1007/s10208-021-09504-0;
arXiv:2010.00327v1 [math.NA] 30 Sep 2020. 

%\bibitem{PU} K. Pozharska and T. Ullrich, A note on sampling recovery of multivariate functions in the uniform norm,  {\it SIAM J. Numer. Anal.} {\bf  60} (2022), no. 3, 1363--1384.

\bibitem{Smol} S.A. Smolyak,   Quadrature and interpolation formulas for tensor products of certain classes of functions. Dokl. Akad. Nauk SSSR, 148 (1963), 1042–1045; English translation in Soviet Math. Dokl., 4 (1963).

\bibitem{VT205} A.P. Solodov and V.N. Temlyakov, Sampling recovery on function classes with a structural condition, arXiv:2404.07210v2 [math.NA] 25 Jun 2024.

\bibitem{T29} V.N. Temlyakov, Approximation of Periodic Functions of Several Variables by Bilinear Forms, Izvestiya AN SSSR, Ser. Mat., {\bf 50} (1986), 137--155; English transl. in Mathematics of the USSR-Izvestia, {\bf 28} (1987), 133--150. 

\bibitem{VTmon} V.N. Temlyakov, Approximation of functions with bounded mixed derivative, Trudy MIAN, {\bf 178} (1986), 1--112. English transl. in Proc. Steklov Inst. Math., {\bf 1} (1989).

\bibitem{VT1990}  V.N. Temlyakov,  On a way of obtaining lower estimates 
for the errors of quadrature formulas, Matem. Sbornik, {\bf 181} (1990),
 1403--1413;  English transl. in  Math. USSR Sbornik, {\bf 71} (1992). 

% \bibitem{VTbookNS}  V.N. Temlyakov, {\em Approximation of periodic functions}, 
%Nova Science Publishes, Inc., New York.,  1993.

\bibitem{VT51} V.N. Temlyakov, On Approximate Recovery of Functions with Bounded Mixed Derivative, J. Complexity, {\bf 9} (1993), 41--59.
 
 %  \bibitem{T1} V.N. Temlyakov,  Greedy algorithms in Banach spaces,
%{\it Adv. Comput. Math.}  \textbf{14}  (2001), 277--292.
 
 % \bibitem{VTbook} V.N. Temlyakov, Greedy Approximation, Cambridge University
%Press, 2011.
 
%\bibitem{VT144} V.N. Temlyakov, Sparse approximation and recovery by greedy algorithms in Banach spaces, {\it Forum of Mathematics, Sigma} {\bf 2} (2014), e12, 26 pages.
  
 \bibitem{VT150} V.N. Temlyakov, Constructive sparse trigonometric approximation and other problems for functions with mixed smoothness, arXiv: 1412.8647v1 [math.NA] 24 Dec 2014, 1--37; Matem. Sb., {\bf 206} (2015), 131--160. 
 
  % \bibitem{VT152} V.N. Temlyakov, Constructive sparse trigonometric approximation   for functions with small mixed smoothness, Constr. Approx., {\bf 45} (2017), 467--495; arXiv: 1503.00282v1 [math.NA] 1 Mar 2015, 1--30. 
   
 %  \bibitem{VT160} V.N. Temlyakov, Universal discretization, Journal of Complexity, {\bf 47} (2018), 97--109; arXiv:1708.08544v1 [math.NA] 28 Aug 2017.
 
 \bibitem{VTbookMA} V. Temlyakov, {\em Multivariate Approximation}, Cambridge University Press, 2018.

\bibitem{VT183} V.N. Temlyakov, On optimal recovery in $L_2$, J. Complexity {\bf 65} (2021), 101545; arXiv:2010.03103v1 [math.NA] 7 Oct 2020.

\bibitem{VT202} V. Temlyakov, Sparse sampling recovery by greedy algorithms, arXiv:2312.13163v2 [math.NA] 30 Dec 2023.

\bibitem{VT203} V. Temlyakov, Sparse sampling recovery in integral norms on some function classes, arXiv:2401.14670v1 [math.NA] 26 Jan 2024.
 
 \bibitem{TU1} V.N. Temlyakov and T. Ullrich, Bounds on Kolmogorov widths of classes with small mixed smoothness, J. Complexity, Available online 4 May 2021, 101575; arXiv:2012.09925v1 [math.NA] 17 Dec 2020.

\bibitem{TWW} J.F. Traub, G.W. Wasilkowski, and H. Wo{\'z}niakowski, Information-Based Complexity, Academic Press, Inc., 1988.


 
  \end{thebibliography}
\end{document}